\newcommand{\loc}{\textnormal{loc}}
\newcommand{\medint}{-\kern  -,375cm\int}
\definecolor{ora}{rgb}{0.8,0.2,0.1}
\definecolor{vio}{rgb}{0.5,0,0.5}
\definecolor{gre}{rgb}{0.1,0.6,0}
\definecolor{verde}{rgb}{0,0.7,0.4}
\newenvironment{michelarev}{\color{azzurro}}{\color{black}}
\newcommand{\bmicr}{\begin{michelarev}}
\newcommand{\emicr}{\end{michelarev}}
\theoremstyle{plain}
\newtheorem{theorem}{Theorem}[section]
\newtheorem{lemma}[theorem]{Lemma}
\theoremstyle{definition}
\theoremstyle{remark}
\newtheorem{remark}[theorem]{Remark}
\theoremstyle{plain}
\definecolor{dg}{rgb}{0.01, 0.75, 0.24}
\def\R{\mathbb{R}}
\numberwithin{equation}{section} \makeatletter
\renewcommand{\p@enumi}{\thesection.}
\makeatother \pagestyle{myheadings} \allowdisplaybreaks
\email{michela.eleuteri@unimore.it}
  \email{antonia.passarellidinapoli@unina.it}
\keywords{Variational inequalities, obstacle problems, duality formulas.}
\subjclass[2000]{35J87, 49J40; 47J20}
\begin{document}
\title[On the validity of variational inequalities for obstacle problems]{On the validity of variational inequalities for obstacle problems with non-standard growth}
\author[M. Eleuteri -- A. Passarelli di Napoli]{Michela Eleuteri -- Antonia Passarelli di Napoli}
\address{Dipartimento di Scienze Fisiche, Informatiche e Matematiche, Universit\`a degli Studi di Modena E Reggio Emilia,  
via Campi 213/b, 41125 Modena, Italy}
\address{Dipartimento di Matematica e Applicazioni ``R. Caccioppoli''
\\
Universit\`a degli Studi di Napoli ``Federico II''\\
Via Cintia, 80126, Napoli (Italy)}
\thanks{\textit{Acknowledgements.}
The work of the authors is supported by GNAMPA (Gruppo Nazionale per l'Analisi Matematica, la Probabilit\`a e le loro Applicazioni) of INdAM (Istituto Nazionale di Alta Matematica), by MIUR through the project FFABR and by the University of Modena and Reggio Emilia through the project FAR2019 ``Equazioni differenziali: problemi evolutivi, variazionali ed applicazioni'' (coord. Prof. M. Manfredini). This research was performed while A. Passarelli di Napoli was visiting the University of Modena and Reggio Emilia and M. Eleuteri was visiting the University of Naples ``Federico II". The hospitality of both Institutions is warmly adknowledged}.

\begin{abstract}
{The aim of the paper is to show that the solutions to variational problems with non-standard growth conditions satisfy a corresponding variational inequality without any smallness assumptions on the gap between growth and coercitivity exponents. Our results rely on techniques based on Convex Analysis that consist in establishing duality formulas and pointwise relations between minimizers and corresponding dual maximizers, for suitable {approximating problems}, that are preserved passing to the limit. In this respect we are able to show that the right class of competitors are the functions with finite energy, in agreement with the unconstrained results.}
\end{abstract}

\maketitle

\begin{center}
\fbox{\today}
\end{center}

\section{Introduction}

More than 30 years ago, the celebrated papers by Marcellini \cite{M89}, \cite{M91} opened the way to the study of the regularity properties of minimizers {of integral functionals} with non-standard growth conditions. Since then, many contributions appeared in several directions and many problems have been solved; however not all the questions have been addressed in an exaustive way, in particular for what  concerns  the obstacle problems. 
\\
It is well known that, for both constrained and unconstrained problems, the regularity of the solutions often comes from the fact that are also extremals, i.e. they solve a corresponding variational inequality or equality.
\\
Actually, in the recent paper \cite{CEP} the authors, dealing with the question of Lipschitz continuity for minimizers of the obstacle problem, were forced to deal with the relation between minima and extremals, in the sense of solutions to a corresponding variational inequality. In that specific situation, this problem has been solved due to a suitable higher differentiability result and  imposing a smallness condition on the gap between the coercivity and the growth exponent of the lagrangian. 
\\
Already for unconstrained minimizers with non-standard growth, the relation between extremals and minima is an issue that required a careful investigation.
Indeed, a direct derivation of such a relation can be obtained in a trivial way only if the gap between the growth and the ellipticity exponent satisfies  a suitable smallness condition.
\\
Otherwise, using a regularization procedure and convex duality theory,  much stronger results have been obtained  by Carozza, Kristensen and Passarelli di Napoli for unconstrained minimizers (see  \cite{CKPPisa}, \cite{CKP Comm}). 
\\
As far as we know, such investigation has not been carried out for constrained minimizers. 

The aim of this paper is to fill this gap, by finding conditions so that the solutions to variational obstacle problems with non standard growth conditions satisfy a corresponding variational inequality.

More precisely, let us consider a class to variational obstacle problems of the form
 \begin{equation}
\label{obst-def0}
\min\left\{\int_\Omega F(Dz): z\in \mathbb{K}_{\psi}^F(\Omega)\right\},
\end{equation}
where $\Omega$ is a bounded open set of $\mathbb{R}^n$, $n \ge 2$.
The function $\psi:\,\Omega \rightarrow [- \infty, + \infty)$, called \textit{obstacle}, is such that 
\begin{equation}\label{assumpsi}
F(D \psi) \in L^1(\Omega)	
\end{equation} and 
the class $\mathbb{K}_{\psi}^F(\Omega)$ is defined as 
\begin{equation}
\label{classeF}
\mathbb{K}_{\psi}^{F}(\Omega) := \left \{z \in u_0+{W^{1,p}_0(\Omega)}: z \ge \psi  \,\, \textnormal{a.e. in $\Omega$},\,\,\, F( Dz) \in L^1(\Omega) \right\},
\end{equation}
where $u_0$ is a fixed boundary value such 
that 
\begin{equation}\label{assumuzero}
F(D u_0) \in L^1(\Omega).	
\end{equation}
To avoid trivialities, in what follows we shall assume that $\mathbb{K}_{\psi}^F(\Omega)$ is not empty. 
We shall consider integrands $F: \mathbb{R}^n \rightarrow \mathbb{R}$ of class $\mathcal{C}^1$ and satisfying  the following growth and strict convexity assumptions:
$$\ell|\xi|^p \le F(\xi) \le \, L \, (1 + |\xi|^q) \eqno{{\rm (H1)}}$$
$$ \nu \, |V_p(\xi) - V_p(\eta)|^2 \le \, F(\xi) - F(\eta) - \langle F'(\eta), \xi - \eta \rangle \eqno{{\rm (H2)}}$$
for all $\xi, \eta \in \mathbb{R}^n$, for $0 < \ell < L$, $\nu>0$  and $1 < p \le q < \infty$ and where we used the customary notation
\begin{equation}
\label{Vp}
V_p(\xi)=(1+|\xi|^2)^{\frac{p-2}{4}}\xi.
\end{equation}
To simplify the statement of our main result, we shall assume that the integrand $F$ satisfies a sort of $\Delta_2$ condition, i.e.
$$F(\lambda \, \xi) \le \, C(\lambda) \, F(\xi)  \eqno{{\rm (H3)}},$$
for every real positive $\lambda >1$ and every $\xi \in \mathbb{R}^n$.
\\
Actually, without (H3), our result holds true supposing, beside \eqref{assumuzero} that $F(cDu_0)\in L^1(\Omega)$, for some constant $c>1$.
\medskip
\begin{remark}
\label{rem-serena}
Let us notice that, by replacing $u_0$ by $\tilde{u}_0 = \max \{u_0, \psi\}$, we
may assume that the boundary value function $u_0$ satisfies $u_0 \ge \,
\psi$ in $\Omega$. Indeed $\tilde{u}_0  = (\psi - u_0)^+ + u_0$ and since
\[
0 \le (\psi - u_0)^+ \le (u - u_0)^+ \in W^{1,p}_0(\Omega),
\] 
the function $(\psi - u_0)^+$, and hence $u - \tilde{u}_0,$ belongs to
$W^{1,p}_0(\Omega)$. 
Moreover assumptions \eqref{assumpsi} and \eqref{assumuzero} imply $F(D\tilde u_0)\in L^1(\Omega)$. Indeed we have
\begin{eqnarray*}
\int_\Omega F(D\tilde u_0)\,dx&=&\int_{\Omega\cap\{ u_0\ge \psi\}} F(D u_0)\,dx+\int_{\Omega\cap\{ u_0< \psi\}} F(D \psi)\,dx\cr\cr
&\le& \int_\Omega \Big(F(D u_0)+F(D \psi)\Big)\,dx<+\infty,	
\end{eqnarray*}
where we used that $F(\xi)\ge 0$, by virtue of the left inequality in (H1).
\end{remark}
In view of the previous remark, from now on, without loss of generality, we shall suppose $$u_0 \ge \,
\psi\qquad\qquad\text{a.e.\,\,in\,\,}\Omega.$$
It is worth mentioning that if $G$ is a $\mathcal{C}^1$ function satisfying (H1) and (H2) with $p = q$, i.e. $G$ satisfies standard $p-$growth conditions, the minimization problem reduces to 
\begin{equation}
\label{p-obst-def0}
\min\left\{\int_\Omega G(Dz): z\in \mathcal{K}_{\psi}(\Omega)\right\},
\end{equation}
where
\begin{equation}
\label{classeA}
\mathcal{K}_{\psi}(\Omega) := \left \{z \in u_0+{W^{1,p}_0(\Omega)}: z \ge \psi \,\, \textnormal{a.e. in $\Omega$} \right\}
\end{equation}
and the assumptions $F(D\psi), F(Du_0) \in L^1(\Omega)$ reduce in turn to $\psi, u_0 \in W^{1,p}(\Omega)$.
\\
In this case, because of the standard growth conditions, it is well known that, if  $u\in u_0+W^{1,p}_0(\Omega)$ is a solution to \eqref{p-obst-def0}, then the corresponding variational inequality 
\begin{equation}\label{var-ineq-stand}
	\int_\Omega \langle G'(Du), Dz-Du\rangle\,dx\ge 0
\end{equation}
holds true, for every $z\in \mathcal{K}_\psi(\Omega)$. This can be proved by observing that, since $\mathcal{K}_{\psi}(\Omega)$ is a convex set, the function $u + \varepsilon (z - u) = (1 - \varepsilon) u + \varepsilon z \in \mathcal{K}_{\psi}(\Omega)$ is an admissible variation for each $0 \le \varepsilon < 1.$
\\
On the other hand, if $u \in \mathcal{K}_{\psi}(\Omega)$ and $\varphi \ge 0,$ with $\varphi \in \mathcal{C}^{\infty}_0(\Omega),$ then $u + \varphi \in \mathcal{K}_{\psi}(\Omega)$ and thus, if $u$ is a solution to \eqref{p-obst-def0}, then also the following inequality holds
\begin{equation}\label{var-ineq-varphi}
	\int_\Omega \langle G'(Du), D\varphi\rangle\,dx\ge 0
\end{equation}
for all $\varphi \in \mathcal{C}^{\infty}_0(\Omega)$, $\varphi \ge \, 0$. 
%

Our goal is to show that, if we assume {\eqref{assumpsi} and} \eqref{assumuzero}, {the} solutions to obstacle problems with non standard growth conditions solve the corresponding variational inequalities, without any restriction on the gap $\frac{q}{p}$. Moreover we will show that the right class of competitors are the functions with  finite energy  and that, in case of standard growth conditions, this coincides with  $\mathcal{K}_{\psi}(\Omega)$. 
\begin{theorem}
\label{teo2}
Let $F: \mathbb{R}^n \rightarrow \mathbb{R}$ be a $\mathcal{C}^1$ function satisfying {\rm (H1)}, {\rm (H2)} and {\rm (H3)}.  Assume moreover that \eqref{assumpsi} and \eqref{assumuzero} hold true. If $u\in \mathbb{K}_\psi^F(\Omega)$ is  the solution to the obstacle problem \eqref{obst-def0}, then 
\begin{equation}
\label{tuttoL1}
F^*(F'(Du)) \in L^1(\Omega) \qquad \qquad \langle F'(Du), Du \rangle \in L^1(\Omega)
\end{equation}
and 
\begin{equation}
\label{div}
\mathrm{div}F'(Du)\le 0 
\end{equation}
in the distributional sense.
Moreover the following variational inequality 
\begin{equation}\label{pmlim}
\int_{\Omega} \langle F'(Du), Dz-Du \rangle \ge \, 0
\end{equation}
holds for all $z \in {\mathbb{K}_\psi^F(\Omega)}$ such that 
\begin{equation}\label{pm}
F(\pm Dz)\in L^1(\Omega)	.
\end{equation}
\end{theorem}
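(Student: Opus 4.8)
The plan is to establish first \eqref{tuttoL1} and \eqref{div}, through an approximation-plus-duality scheme modelled on \cite{CKPPisa, CKP Comm} and adapted to the constraint (as in \cite{CEP}), and then to recover the variational inequality \eqref{pmlim} from them by an elementary convexity argument. By Remark \ref{rem-serena} we already have $u_0\ge\psi$. To bring the problem into the range of standard growth I would set $F_\varepsilon(\xi):=F(\xi)+\varepsilon(1+|\xi|^2)^{q/2}$: this is $p$-coercive and has $q$-growth with a constant \emph{uniform} in $\varepsilon$, so for fixed $\varepsilon>0$ it is a standard $q$-growth integrand, while $F_\varepsilon\to F$ and $F_\varepsilon'\to F'$ locally uniformly as $\varepsilon\to0$. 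Since for a general datum $Du_0$ need not lie in $L^q$, I would simultaneously approximate the datum and the obstacle by Lipschitz functions $u_0^k\to u_0$ and $\psi^k\to\psi$ with $\psi^k\le u_0^k$ and with $\sup_k\int_\Omega F(c\,Du_0^k)\,dx<\infty$, $\sup_k\int_\Omega F(D\psi^k)\,dx<\infty$; this is exactly where {\rm (H3)}, or the alternative hypothesis $F(cDu_0)\in L^1(\Omega)$, is used.

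For each $\varepsilon,k$ the obstacle problem $\min\{\int_\Omega F_\varepsilon(Dz):z\in u_0^k+W^{1,q}_0(\Omega),\ z\ge\psi^k\}$ has a unique solution $u_{\varepsilon,k}$ which, because of the standard growth, satisfies the corresponding variational inequality (by the argument recalled in the Introduction) and, by convex duality for obstacle problems, admits a dual maximizer $\sigma_{\varepsilon,k}$ with no duality gap, with $\sigma_{\varepsilon,k}=F_\varepsilon'(Du_{\varepsilon,k})$, with the pointwise Fenchel identity $F_\varepsilon(Du_{\varepsilon,k})+F_\varepsilon^*(\sigma_{\varepsilon,k})=\langle\sigma_{\varepsilon,k},Du_{\varepsilon,k}\rangle$ a.e., with $\mathrm{div}\,\sigma_{\varepsilon,k}\le0$ in $\mathcal D'(\Omega)$, and with the nonnegative measure $-\mathrm{div}\,\sigma_{\varepsilon,k}$ concentrated on the coincidence set $\{u_{\varepsilon,k}=\psi^k\}$.

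The core of the argument — and the step I expect to be the real obstacle — is the \emph{uniform} bound $\sup_{\varepsilon,k}\int_\Omega F_\varepsilon^*(\sigma_{\varepsilon,k})\,dx<\infty$ (equivalently, $\sup_{\varepsilon,k}\int_\Omega\langle\sigma_{\varepsilon,k},Du_{\varepsilon,k}\rangle\,dx<\infty$). Minimality and the admissibility of $u_0^k$ give at once $\sup\int_\Omega F_\varepsilon(Du_{\varepsilon,k})\,dx<\infty$ and hence a uniform $W^{1,p}$-bound, but Young's inequality alone on the boundary term only reproduces minimality; the dual energy must instead be controlled through the no-gap identity combined with the complementarity condition, bounding the mass of $-\mathrm{div}\,\sigma_{\varepsilon,k}$ on the coincidence set by comparison with the regular obstacle via $[-\mathrm{div}\,F_\varepsilon'(D\psi^k)]^+$, and a careful choice of the approximating data to keep this uniform as $k\to\infty$. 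Granted this, the uniform $q$-growth of $F_\varepsilon$ gives $F_\varepsilon^*(\cdot)\ge c|\cdot|^{q'}-C$ with $c,C$ independent of $\varepsilon$, so $\{\sigma_{\varepsilon,k}\}$ is bounded in $L^{q'}(\Omega)$. Diagonalizing $\varepsilon=\varepsilon(k)\to0$ as $k\to\infty$, one shows $u_{\varepsilon,k}\to u$ strongly in $W^{1,p}$ (minimality, energy convergence and the uniform convexity {\rm (H2)}), hence $Du_{\varepsilon,k}\to Du$ a.e.; then $\sigma_{\varepsilon,k}\to F'(Du)$ a.e. and, by the $L^{q'}$-bound, $\sigma_{\varepsilon,k}\rightharpoonup F'(Du)$ in $L^{q'}(\Omega)$. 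Fatou's lemma applied to the Fenchel identity, whose three terms are bounded below by constants, yields $F^*(F'(Du))\in L^1(\Omega)$, and then $\langle F'(Du),Du\rangle=F(Du)+F^*(F'(Du))$ together with $F(Du)\in L^1(\Omega)$ gives $\langle F'(Du),Du\rangle\in L^1(\Omega)$, i.e.\ \eqref{tuttoL1}; passing $\int_\Omega\langle\sigma_{\varepsilon,k},D\varphi\rangle\ge0$, valid for all $\varphi\in\mathcal C^\infty_0(\Omega)$ with $\varphi\ge0$, to the weak limit gives \eqref{div}.

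Finally, to obtain \eqref{pmlim}, fix $z\in\mathbb K_\psi^F(\Omega)$ with $F(\pm Dz)\in L^1(\Omega)$. Since $F$ is convex, $\mathbb K_\psi^F(\Omega)$ is convex, so $u+\varepsilon(z-u)\in\mathbb K_\psi^F(\Omega)$ for $0<\varepsilon\le1$, and minimality of $u$ gives $\int_\Omega\varepsilon^{-1}\big(F(Du+\varepsilon(Dz-Du))-F(Du)\big)\,dx\ge0$. By convexity the integrand decreases, as $\varepsilon\downarrow0$, to $\langle F'(Du),Dz-Du\rangle$, and for $\varepsilon\in(0,1]$ it stays between $\langle F'(Du),Dz-Du\rangle$ and $F(Dz)-F(Du)$; both bounds lie in $L^1(\Omega)$ — the second trivially, the first because $-F(-Dz)-F^*(F'(Du))\le\langle F'(Du),Dz\rangle\le F(Dz)+F^*(F'(Du))$ by Young's inequality, which belongs to $L^1(\Omega)$ thanks to \eqref{tuttoL1}, \eqref{pm} and $F(Dz)\in L^1(\Omega)$, while $\langle F'(Du),Du\rangle\in L^1(\Omega)$ by \eqref{tuttoL1}. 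Dominated convergence then yields $\int_\Omega\langle F'(Du),Dz-Du\rangle\,dx=\lim_{\varepsilon\to0^+}\int_\Omega\varepsilon^{-1}\big(F(Du+\varepsilon(Dz-Du))-F(Du)\big)\,dx\ge0$, which is \eqref{pmlim}.
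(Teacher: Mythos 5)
Your closing argument for \eqref{pmlim} is correct and is genuinely different from (and simpler than) the paper's: once \eqref{tuttoL1} is known, the paper instead passes to the limit in the approximate variational inequalities $\int_\Omega\langle\sigma_k,Dz-Du_k\rangle\,dx\ge0$, using Fatou's lemma on $\langle\sigma_k,Du_k\rangle$ (which is bounded below thanks to the Fenchel identity) and Vitali's theorem on $\langle\sigma_k,Dz\rangle$ (equi-integrable by the very bound $|\langle\sigma_k,Dz\rangle|\le 2F_k^*(\sigma_k)+F(Dz)+F(-Dz)$ you invoke). Your direct difference-quotient argument, with monotonicity in $\varepsilon$ and the two-sided $L^1$ domination via Young's inequality and \eqref{pm}, reaches the same conclusion from minimality alone and makes transparent why \eqref{pm} is the right hypothesis. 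This part stands.

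The genuine gap is in the step you yourself flag as ``the real obstacle,'' the uniform bound on $\int_\Omega F_\varepsilon^*(\sigma_{\varepsilon,k})\,dx$. Your proposed route through complementarity --- bounding the mass of $-\operatorname{div}\sigma_{\varepsilon,k}$ on the coincidence set by comparison with $[-\operatorname{div}F_\varepsilon'(D\psi^k)]^+$ --- requires $\psi$ to admit $\operatorname{div}F'(D\psi)$ as a (one-sided) measure, which is far more regularity than the sole hypothesis \eqref{assumpsi}, and is not worked out. Moreover your claim that ``Young's inequality alone on the boundary term only reproduces minimality'' is not correct: the estimate does follow from testing the approximate variational inequality with $u_0$ (admissible since $u_0\ge\psi$ by Remark \ref{rem-serena}) and a reabsorption trick. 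Indeed
\begin{equation*}
\int_\Omega F_k^*(\sigma_k)\,dx=\int_\Omega\langle\sigma_k,Du_k\rangle\,dx-\int_\Omega F_k(Du_k)\,dx
\le\frac12\int_\Omega\langle\sigma_k,2Du_0\rangle\,dx-\int_\Omega F_k(Du_k)\,dx
\le\frac12\int_\Omega F_k^*(\sigma_k)\,dx+\frac12\int_\Omega F_k(2Du_0)\,dx,
\end{equation*}
and after reabsorbing the half of the dual energy, (H3) bounds $\int_\Omega F(2Du_0)\,dx$ by $C\int_\Omega F(Du_0)\,dx<\infty$; this is precisely where (H3) (or the alternative $F(cDu_0)\in L^1(\Omega)$) enters, not in the construction of approximate data. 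A second structural issue: your regularization $F_\varepsilon=F+\varepsilon(1+|\xi|^2)^{q/2}$ approximates $F$ \emph{from above} and forces the approximate problems into $W^{1,q}$ with Lipschitz data $u_0^k$, so identifying the limit of the $u_{\varepsilon,k}$ with the minimizer $u\in W^{1,p}$ of \eqref{obst-def0} runs head-on into the Lavrentiev phenomenon. The paper avoids both difficulties by approximating \emph{from below} with the monotone sequence $F_k\nearrow F$ of Lemma \ref{lemmaCGM}, each $F_k$ having standard $p$-growth, so that all approximate problems are posed on the same class $\mathcal{K}_\psi(\Omega)$ and monotone convergence identifies the limit energies without any relaxation.
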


Here $F^*$ denotes the polar, or Fenchel conjugate, of the convex continuous function $F$, introduced in \eqref{Fconiug} of Subsection 2.4.
\\
Hence, in view of Theorem \ref{teo2}, $u$ in particular solves the corresponding variational inequality and $F'(Du) \in L^{q'}(\Omega; \mathbb{R}^n)$ with $q' = \frac{q}{q-1}$.

It is worth noticing that, if there exists $f: [0,+\infty)\to [0,+\infty)$ such that  $$F(\xi)=f(|\xi|), $$ then  assumption \eqref{pm} is trivially satisfied. On the other hand, in order to have \eqref{pm} satisfied for every $z\in \mathbb{K}_\psi^F(\Omega)$, it suffices to assume that
\begin{equation}
	F(\xi)=F(-\xi).
\end{equation}
 As remarked in \cite{cianchi}, under this assumption  $F(\xi)$ needs not to depend on the length of $\xi$ nor to be the sum of its components $\xi_i$. Indeed, an example of $F(\xi)$ satisfying our assumptions is
$$F(\xi)=|\xi_1-\xi_2|^q+|\xi_1+\xi_2|^p\log^\alpha(1+|\xi_1|)\qquad\quad \xi\in \mathbb{R}^2,$$
with $2\le p\le q$.

\medskip

In case the gap $\frac{q}{p}$ satisfies a suitable smallness assumption and if the obstacle $\psi\in W^{1,q}_{\loc}(\Omega)$, we are able to prove that the solution to problem
\begin{equation}
\label{p-obst-defF}
\min\left\{\int_\Omega F(Dz): z\in \mathcal{K}_{\psi}(\Omega)\right\},
\end{equation}
{with $\mathcal{K}_{\psi}(\Omega)$ as in \eqref{classeA},} solves the corresponding variational inequality without any regularity on the boundary datum $u_0$. Moreover, we can prove that the solution to \eqref{p-obst-defF} locally belongs to $W^{1,q}_{\loc}(\Omega)$ if the obstacle $\psi$ locally belongs to $W^{1,q}$. 
 More precisely, we have the following 
\begin{theorem}
\label{teo3}
Let $F: \mathbb{R}^n \rightarrow \mathbb{R}$ be a $\mathcal{C}^1$ function satisfying {\rm (H1)}, {\rm (H2)} and {\rm (H3)}. Assume  that
$$D\psi\in W^{1,q}_{\loc}(\Omega) $$  and let 
 $u \in \mathcal{K}_{\psi}(\Omega)$ be the solution to the obstacle problem \eqref{p-obst-defF}. If
\begin{equation}
\label{gap-pq}
1 < p \le q < \frac{np}{n-1}
\end{equation}
then {$u$ is such that }
\[
F^*(F'(Du)) \in L^1_{\loc}(\Omega) \qquad \qquad \langle F'(Du), Du \rangle \in L^1_{\loc}(\Omega)
\]
and
$${\rm div} F'(Du) \le \, 0$$
locally, in the distributional sense and moreover $u \in W^{1, q}_{\loc}(\Omega)$.
\end{theorem}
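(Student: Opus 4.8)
The plan is to reduce Theorem \ref{teo3} to Theorem \ref{teo2} by a localization argument, handling separately the ``interior regularity'' upgrade $u\in W^{1,q}_{\loc}(\Omega)$. First I would fix a ball $B_{2R}\Subset\Omega$ and, using the assumption $D\psi\in W^{1,q}_{\loc}(\Omega)$ together with the gap condition \eqref{gap-pq}, establish a higher integrability statement for $Du$ on $B_R$. The natural route is via an approximation/Caccioppoli argument adapted to the obstacle problem: one compares $u$ with competitors of the form $u+\eta^{\theta}(\psi_\varepsilon-u)^{-}$ and $u\pm\eta^\theta\varphi$ (with $\varphi\ge 0$ supported in $B_{2R}$, $\eta$ a cutoff), exploiting (H2) to control $|V_p(Du)|$ in terms of $D\psi$ and lower-order terms. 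The gap condition $q<\frac{np}{n-1}$ is precisely what makes the resulting iteration close — it is the threshold at which the Sobolev embedding $W^{1,q}_{\loc}\hookrightarrow$ (a space compatible with the $p$-energy test functions) still permits absorbing the obstacle contribution; this local higher integrability is essentially the content of the known results for the $p,q$-obstacle problem under small gap, and I would cite/adapt those rather than reproving them in full. The outcome of this first step is that $F(Du),\,F(-Du)\in L^1_{\loc}(\Omega)$ and, after bootstrapping, $Du\in L^q_{\loc}(\Omega)$, hence $u\in W^{1,q}_{\loc}(\Omega)$.

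The second step is to run the machinery of Theorem \ref{teo2} \emph{locally}. Fix $B_{2R}\Subset\Omega$; having just shown $Du\in L^q_{\loc}$, I would apply Theorem \ref{teo2} (or rather its proof) to the restricted obstacle problem on $B_{2R}$ with boundary datum $u_0:=u$, which now satisfies the hypothesis $F(Du_0)\in L^1(B_{2R})$ (and $F(cDu_0)\in L^1$ by (H3)). By uniqueness coming from strict convexity (H2), $u$ itself is the solution of this localized problem, so Theorem \ref{teo2} yields $F^*(F'(Du))\in L^1(B_{2R})$, $\langle F'(Du),Du\rangle\in L^1(B_{2R})$, and $\operatorname{div}F'(Du)\le 0$ in $\mathcal D'(B_{2R})$. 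Since $B_{2R}\Subset\Omega$ was arbitrary, all three conclusions hold locally on $\Omega$. The variational-inequality conclusion of Theorem \ref{teo2} is what underlies the distributional inequality $\operatorname{div}F'(Du)\le 0$: testing \eqref{pmlim} with $z=u+\varphi$, $\varphi\in\mathcal C_0^\infty(B_{2R})$, $\varphi\ge 0$, which is admissible because $F(\pm D(u+\varphi))\in L^1(B_{2R})$ once $Du\in L^q_{\loc}$ and (H1) are in force.

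The main obstacle I anticipate is the first step — the local $W^{1,q}$ estimate and the associated higher integrability — because this is where the non-standard growth genuinely bites: one cannot simply test the Euler--Lagrange inequality with $Du$-dependent functions, and the obstacle $\psi$ re-enters through the constraint set in a way that the unconstrained Caccioppoli inequality does not see. Concretely, the delicate points are (i) choosing the obstacle-respecting test functions so that the ``bad'' term involving $D\psi$ appears with an exponent that the gap condition can digest, and (ii) justifying the exchange of minimization and the approximation $\psi_\varepsilon\to\psi$ (regularizing the obstacle) so that the a priori estimates pass to the limit. Once $Du\in L^q_{\loc}(\Omega)$ is secured, the remainder is a clean application of Theorem \ref{teo2} on each compactly contained ball, with no further smallness needed. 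I would also remark that no regularity of the original boundary datum $u_0$ is used anywhere, since after the first step we replace it by the (now regular enough) minimizer $u$ itself on each subdomain.
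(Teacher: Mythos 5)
Your overall architecture (localize, then invoke Theorem \ref{teo2} on balls) is in the right spirit, but the proposal has a genuine gap exactly where the theorem's content lies: the local higher integrability $Du\in L^q_{\loc}(\Omega)$. You defer this to ``known results for the $p,q$-obstacle problem under small gap'', but under the precise hypotheses here ($D\psi\in W^{1,q}_{\loc}$, the full range $q<\frac{np}{n-1}$, and no regularity of $u_0$) this is one of the assertions the theorem is proving, not something that can be cited. Worse, the route you sketch --- testing a variational inequality for $u$ with obstacle-respecting perturbations --- is circular: under non-standard growth one does not know a priori that $u$ satisfies any variational inequality (that is precisely what Theorems \ref{teo2} and \ref{teo3} establish), and $F'(Du)$ need not a priori be integrable against the gradients of your test functions. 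The paper circumvents this by working at the level of the regularized problems: the minimizers $u_k$ of the $p$-growth approximations $F_k$ do satisfy \eqref{var-ineq2}, which is tested with the translated, constraint-preserving functions $v^1_k(x)=\eta^2(x)\,[(u_k-\psi)(x+h)-(u_k-\psi)(x)]$ and $v^2_k(x)=\eta^2(x-h)\,[(u_k-\psi)(x-h)-(u_k-\psi)(x)]$; adding the two resulting inequalities yields a uniform fractional estimate $\int_{B_\rho}|\tau_h V_p(Du_k)|^2\le C|h|^\alpha$ with $\alpha=1-n\left(\frac1p-\frac1q\right)$, and the Besov--Sobolev embedding (Lemma \ref{lefrac}) together with an iteration of exponents $p_j\nearrow \frac{n(p-1)}{n-1-\frac{n}{q}}>q$ closes the argument. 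None of this machinery appears in your proposal, and it is the core of the proof.

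There is also a structural inversion: you make the duality conclusions ($F^*(F'(Du))\in L^1_{\loc}$, $\langle F'(Du),Du\rangle\in L^1_{\loc}$, $\operatorname{div}F'(Du)\le 0$) depend on first knowing $Du\in L^q_{\loc}$, so that $u$ itself can serve as local boundary datum. The paper obtains these conclusions independently and more cheaply: on a.e.\ sphere $\partial B$ the trace of $u$ lies in $W^{1,p}(\partial B)$, and its harmonic extension $h$ lies in $W^{1,\frac{np}{n-1}}(B)$ by Theorem \ref{ext}; the gap condition $q<\frac{np}{n-1}$ then gives $F(Dh)\in L^1(B)$, so $h$ is an admissible local boundary datum for the argument of Theorem \ref{teo2} without any prior regularity of $u$. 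To salvage your plan you would need either to supply the difference-quotient argument for your first step in full, or to adopt the harmonic-extension localization so that the two halves of the statement are decoupled.
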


\begin{remark}
Note that, arguing as in \cite{CKPPisa}, in case
\[
\frac{np}{n-1} \le q < p^*
\]
and $D \psi \in W^{1,q}_{\loc}(\Omega)$, then the solution $u$ to the obstacle problem \eqref{p-obst-def0} belongs to $W^{1,r}_{\loc}(\Omega)$ for all $r < \bar{p}$ being
\[
\bar{p} := \frac{np}{n - \frac{p}{p-1} \left (1 - n \left (\frac{1}{p} - \frac{1}{q} \right ) \right )}.
\]
\end{remark}


This result is particularly important in order to prevent the Lavrentiev phenomenon tha may occurr in the case of anisotropic growth conditions.
\\
\\
Let's mention a few words about the techniques employed. Our Lagrangian $F$ has been suitably approximated  by strictly convex and uniformly
elliptic integrands $F_k,$  in order to facilitate a systematic use of the dual problems, in the sense
of Convex Analysis. The minimizers of $F_k$, say $u_k$, strongly converge in $W^{1,p}$ to the minimizer $u$ of \eqref{obst-def0}  and to every such minimizer $u_k$ we
can associate the solutions of certain dual maximization problems for divergence-measure fields. 
\\
Next, we establish duality formulas and pointwise relations between minimizers and dual maximizers  that are preserved in passing to the limit. Such
estimates will provide conditions in order for the variational inequality to hold for
a constrained minimizer. The statement and the proofs of our results, that are the counterpart of those in \cite{CKPPisa} concerning the unconstrained setting, rely on a suitable version of Anzellotti type pairing which involve general divergence-measure fields and specific representation of Sobolev functions, and which reduces to integration by part formula once the correct summability is required on the fields involved. 

\section{Notations and Preliminary Results}\label{prelim}

\bigskip

\noindent
In this paper we shall denote by $C$ or $c$  a
general positive constant that may vary on different occasions, even within the
same line of estimates.
Relevant dependencies  will be suitably emphasized using
parentheses or subscripts.  In what follows, $B(x,r)=B_r(x)=\{y\in \R^n:\,\, |y-x|<r\}$ will denote the ball centered at $x$ of radius $r$.
We shall omit the dependence on the center and on the radius when no confusion arises.

For the auxiliary function $V_{p}$, introduced in \eqref{Vp}, we recall the following
estimate (see the proof of \cite[Lemma 8.3]{Giusti}):

\begin{lemma} \label{Vi}
Let $1<p<\infty$. There exists a constant $c=c(n,p)>0$
such that
$$
c^{-1}\Bigl( 1+| \xi |^2+| \eta |^2 \Bigr)^{\frac{p-2}{2}}\leq
\frac{|V_{p}(\xi )-V_{p}(\eta )|^2}{|\xi -\eta |^2} \leq
c\Bigl( 1+|\xi |^2+|\eta |^2 \Bigr)^{\frac{p-2}{2}}
$$
for any $\xi$, $\eta \in \R^n$.
\end{lemma}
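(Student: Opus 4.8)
The plan is to set $h:=V_p$, so that $h(\xi)=(1+|\xi|^2)^{\frac{p-2}{4}}\xi$ is a map of class $\mathcal{C}^{\infty}(\mathbb{R}^n;\mathbb{R}^n)$ (recall $1+|\xi|^2\ge 1$), and to read the increments of $h$ off its Jacobian along segments. A direct computation gives
\[
Dh(\zeta)=(1+|\zeta|^2)^{\frac{p-2}{4}}\,\mathrm{Id}+\frac{p-2}{2}(1+|\zeta|^2)^{\frac{p-6}{4}}\,\zeta\otimes\zeta,
\]
whose eigenvalues are $(1+|\zeta|^2)^{\frac{p-2}{4}}$ (with multiplicity $n-1$, on $\zeta^{\perp}$) and $(1+|\zeta|^2)^{\frac{p-6}{4}}\bigl(1+\tfrac{p}{2}|\zeta|^2\bigr)$ (in the direction of $\zeta$). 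Since $p>1$ one has $\min\{1,\tfrac{p}{2}\}(1+|\zeta|^2)\le 1+\tfrac{p}{2}|\zeta|^2\le\max\{1,\tfrac{p}{2}\}(1+|\zeta|^2)$, so $Dh(\zeta)$ is symmetric and positive definite with
\[
c^{-1}(1+|\zeta|^2)^{\frac{p-2}{4}}|v|^2\le\langle Dh(\zeta)v,v\rangle\le c\,(1+|\zeta|^2)^{\frac{p-2}{4}}|v|^2\qquad\text{for all }v\in\mathbb{R}^n,
\]
$c=c(p)$; in particular all its singular values lie in the same range.

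Writing $\zeta_t:=(1-t)\eta+t\xi$ and using $h(\xi)-h(\eta)=\int_0^1 Dh(\zeta_t)(\xi-\eta)\,dt$, the upper estimate bounds $|h(\xi)-h(\eta)|$ from above, while the lower estimate applied to $\langle h(\xi)-h(\eta),\xi-\eta\rangle=\int_0^1\langle Dh(\zeta_t)(\xi-\eta),\xi-\eta\rangle\,dt$ together with Cauchy--Schwarz bounds it from below. One thus obtains, with $c=c(p)$,
\[
c^{-1}|\xi-\eta|\,I(\xi,\eta)\le|V_p(\xi)-V_p(\eta)|\le c\,|\xi-\eta|\,I(\xi,\eta),\qquad I(\xi,\eta):=\int_0^1(1+|\zeta_t|^2)^{\frac{p-2}{4}}\,dt,
\]
so that, squaring and dividing by $|\xi-\eta|^2$, the lemma is reduced to the scalar estimate $I(\xi,\eta)\simeq(1+|\xi|^2+|\eta|^2)^{\frac{p-2}{4}}$. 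As everything is symmetric in $\xi$ and $\eta$, we may assume $\mu:=|\xi|=\max\{|\xi|,|\eta|\}$ and set $M^2:=1+|\xi|^2+|\eta|^2$; by convexity of the Euclidean norm $|\zeta_t|\le\mu$ for every $t$, while $1+\mu^2\le M^2\le 2(1+\mu^2)$.

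To prove the scalar estimate we distinguish two regimes. If $|\xi-\eta|\le\tfrac12\mu$, then $|\zeta_t|\ge|\xi|-|\xi-\eta|\ge\tfrac12\mu$ for every $t$, hence $1+|\zeta_t|^2\simeq 1+\mu^2\simeq M^2$ uniformly in $t$; integrating yields $I(\xi,\eta)\simeq(1+\mu^2)^{\frac{p-2}{4}}\simeq M^{\frac{p-2}{2}}$, which settles both inequalities irrespective of the sign of $p-2$. In the complementary regime $|\xi-\eta|>\tfrac12\mu$, the lower bound on $I$ is easy: when $p<2$ one simply has $(1+|\zeta_t|^2)^{\frac{p-2}{4}}\ge(1+\mu^2)^{\frac{p-2}{4}}$ for all $t$; when $p\ge 2$ one uses $|\zeta_t|\ge\mu-(1-t)|\xi-\eta|\ge\tfrac12\mu$ on the set $\{t:\,1-t\le\mu/(2|\xi-\eta|)\}$, whose length is at least $\tfrac14$ because $|\xi-\eta|\le|\xi|+|\eta|\le 2\mu$, and integrating over it gives $I(\xi,\eta)\ge c(p)(1+\mu^2)^{\frac{p-2}{4}}$. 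When $p\ge 2$ the upper bound on $I$ is likewise immediate, from $1+|\zeta_t|^2\le M^2$. What remains, and is the genuine point of the lemma, is the upper bound on $I(\xi,\eta)$ when $1<p<2$ and $|\xi-\eta|>\tfrac12\mu$.

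For that case we exploit the geometry of the segment. Writing $|\zeta_t|^2=|\xi-\eta|^2(t-t_0)^2+d^2$, with $d$ the distance of the origin to the line through $\xi$ and $\eta$, the substitution $s=|\xi-\eta|(t-t_0)$ turns $I(\xi,\eta)$ into $\tfrac1{|\xi-\eta|}\int_J(1+d^2+s^2)^{\frac{p-2}{4}}\,ds$ for an interval $J$ of length $|\xi-\eta|$. Since $r\mapsto(1+d^2+r^2)^{\frac{p-2}{4}}$ is even and non-increasing on $[0,\infty)$, the integral over $J$ does not exceed the one over the centered interval $[-|\xi-\eta|/2,|\xi-\eta|/2]$; discarding the non-negative term $d^2$ then gives $I(\xi,\eta)\le\tfrac2{|\xi-\eta|}\int_0^{|\xi-\eta|/2}(1+s^2)^{\frac{p-2}{4}}\,ds$. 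If $|\xi-\eta|\le 2$ the right-hand side is $\le 1$ while $M^2$ is bounded by a constant depending only on $p$ (here $\mu<2|\xi-\eta|\le 4$), so $I(\xi,\eta)\le c(p)M^{\frac{p-2}{2}}$. If $|\xi-\eta|>2$ one splits $\int_0^{|\xi-\eta|/2}=\int_0^1+\int_1^{|\xi-\eta|/2}$, bounds the first integral by $1$ and the second by $\int_1^{|\xi-\eta|/2}s^{\frac{p-2}{2}}\,ds\le c(p)|\xi-\eta|^{p/2}$ (the exponent $\tfrac{p-2}{2}$ lies in $(-\tfrac12,0)$, with primitive $\tfrac{2}{p}s^{p/2}$), whence $I(\xi,\eta)\le c(p)|\xi-\eta|^{\frac{p-2}{2}}\le c(p)M^{\frac{p-2}{2}}$ since now $M^2\simeq|\xi-\eta|^2$. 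Collecting all cases proves the scalar estimate, and hence the lemma. The main obstacle is exactly this last step: for $1<p<2$ the integrand $(1+|\zeta_t|^2)^{\frac{p-2}{4}}$ is large precisely where the segment $[\eta,\xi]$ comes near the origin, so a pointwise comparison is impossible and one must quantify, through the linear change of variables and the comparison with the centered interval, that such values of $t$ contribute only a negligible amount; everything else is routine.
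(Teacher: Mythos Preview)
Your argument is correct. The paper does not actually prove this lemma: it merely refers the reader to \cite[Lemma~8.3]{Giusti}. What you wrote is precisely the classical route to that estimate---compute the Jacobian of $V_p$, reduce to the one-dimensional weight integral $I(\xi,\eta)=\int_0^1(1+|\zeta_t|^2)^{\frac{p-2}{4}}\,dt$, and control it by a case split on the relative sizes of $|\xi-\eta|$ and $\max\{|\xi|,|\eta|\}$; the only nontrivial step, the upper bound on $I$ for $1<p<2$ when the segment passes near the origin, you handle cleanly via the affine change of variables and comparison with the centered interval. Incidentally, your proof yields $c=c(p)$ with no dependence on~$n$, which is a harmless sharpening of the statement as recorded in the paper.
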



\subsection{Besov Spaces}
 Let us recall that, for every   function $f:\mathbb{R}^{n}\to \mathbb{R}$ 
the finite difference operator is defined by
$$
\tau_{s,h}f(x)=f(x+he_{s})-f(x)
$$
where $h\in\mathbb{R}$, $e_{s}$ is the unit vector in the $x_{s}$
direction and $s\in\{1,\ldots,n\}.$
\\
Let $1\le p< \infty$ and
$0<\alpha<1$. The Besov space $B^{\alpha}_{p,\infty}(\mathbb{R}^n)$ consists of the function   $v\in L^p(\mathbb{R}^n)$ such that
   $$
[v]_{\dot{B}^\alpha_{p,\infty}(\R^n)} =\sup_{h\in\R^n}   \left(\int_{\R^n}\frac{|v(x+h)-v(x)|^p}{|h|^{\alpha p }} dx\right)^\frac{1}{p} <\infty.$$
One can simply take supremum over $|h|\leq \delta$ and obtain an equivalent norm.  By construction, $B^\alpha_{p,\infty}(\R^n)\subset L^p(\R^n)$. 
\\
Given a domain $\Omega\subset\R^n$ , we say that $v$ belongs to the local Besov space $B^\alpha_{p,\infty,\loc}$ if $\varphi\,v$ belongs to the global Besov space $B^\alpha_{p,q}(\R^n)$ whenever $\varphi$  belongs to the class $\mathcal{C}^\infty_0(\Omega)$ of smooth functions with compact support contained in $\Omega$. One also has the following fractional version of Sobolev embeddings (see \cite{Ha})\noindent

\begin{lemma}\label{lefrac} Let  $f\in
L^{p}(B_{R})$. Suppose that there exist $\rho\in(0,R)$,   $0<\alpha<1$ and  $M>0$ such that
$$
\sum_{s=1}^{n}\int_{B_{\rho}}|\tau_{s,h}f(x)|^{p}\,dx\leq
M^{p} |h|^{p\alpha },
$$
for every $h$ such that $|h|<\frac{R-\rho}{2}$. Then $f\in L^{\frac{pn}{n-p\beta}}(B_{\rho})$ for every $\beta\in (0,\alpha)$
and
$$
||f||_{L^{\frac{pn}{n-p\beta}}(B_{\rho})}\leq
c\left(M+||f||_{L^{p}(B_{R})}\right),
$$
with $c= c(n, N,R,\rho,\alpha,\beta)$.
\end{lemma}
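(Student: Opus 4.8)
The plan is to reduce the statement to a Besov regularity property of a localized version of $f$ and then to invoke the embedding of Besov spaces into Lebesgue spaces. First I would fix an intermediate radius $\sigma$ with $\rho<\sigma<R$, say $\sigma=\frac{R+\rho}{2}$, and choose a cut-off function $\varphi\in \mathcal C^\infty_0(B_\sigma)$ with $\varphi\equiv 1$ on $B_\rho$, $0\le\varphi\le 1$ and $|D\varphi|\le \frac{c}{R-\rho}$. Setting $g:=\varphi f$, extended by $0$ outside $B_\sigma$, one has $g\in L^p(\R^n)$ with $\|g\|_{L^p(\R^n)}\le\|f\|_{L^p(B_R)}$ and $g\equiv f$ on $B_\rho$. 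Hence it is enough to prove that $g\in B^\alpha_{p,\infty}(\R^n)$ with $[g]_{\dot{B}^\alpha_{p,\infty}(\R^n)}\le c\,(M+\|f\|_{L^p(B_R)})$, and then to use the continuous embedding $B^\alpha_{p,\infty}(\R^n)\hookrightarrow L^{\frac{pn}{n-p\beta}}(\R^n)$ for $\beta\in(0,\alpha)$.

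To get the Besov bound I would use the discrete Leibniz rule
\[
\tau_{s,h}g(x)=\varphi(x)\,\tau_{s,h}f(x)+\tau_{s,h}\varphi(x)\,f(x+he_s)
\]
and estimate the two terms separately for $|h|<\frac{R-\rho}{2}$. The first term is supported in $\{\varphi\ne 0\}\subset B_\sigma\subset B_R$, so, using $0\le\varphi\le 1$ and the difference-quotient hypothesis, $\int_{\R^n}|\varphi(x)\,\tau_{s,h}f(x)|^p\,dx\le c\,M^p|h|^{p\alpha}$. For the second term, $\tau_{s,h}\varphi(x)\ne 0$ forces $x\in B_{\sigma+|h|}\subset B_R$ and $x+he_s\in B_R$, while $|\tau_{s,h}\varphi(x)|\le \frac{c\,|h|}{R-\rho}$; hence $\int_{\R^n}|\tau_{s,h}\varphi(x)\,f(x+he_s)|^p\,dx\le \frac{c\,|h|^p}{(R-\rho)^p}\,\|f\|_{L^p(B_R)}^p$. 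Since $\alpha<1$ and $|h|$ stays bounded, $|h|^p\le c\,|h|^{p\alpha}$; for $|h|\ge\frac{R-\rho}{2}$ the crude bound $\int_{\R^n}|\tau_{s,h}g|^p\,dx\le 2^p\|g\|_{L^p(\R^n)}^p$ combined with $|h|^{p\alpha}\ge(\frac{R-\rho}{2})^{p\alpha}$ gives the estimate as well. Summing over $s\in\{1,\dots,n\}$ produces $[g]_{\dot{B}^\alpha_{p,\infty}(\R^n)}\le c\,(M+\|f\|_{L^p(B_R)})$ with $c=c(n,R,\rho,\alpha)$, i.e.\ $g\in B^\alpha_{p,\infty}(\R^n)$ with controlled norm.

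It then remains to insert the Besov--Lebesgue embedding: for $\beta\in(0,\alpha)$ with $p\beta<n$ one has $B^\alpha_{p,\infty}(\R^n)\hookrightarrow B^\beta_{p,p}(\R^n)=W^{\beta,p}(\R^n)$ (monotonicity of the Besov scale in the smoothness and fine indices) and the fractional Sobolev embedding $W^{\beta,p}(\R^n)\hookrightarrow L^{\frac{pn}{n-p\beta}}(\R^n)$, so that $\|g\|_{L^{pn/(n-p\beta)}(\R^n)}\le c\,\|g\|_{B^\alpha_{p,\infty}(\R^n)}\le c\,(M+\|f\|_{L^p(B_R)})$; restricting to $B_\rho$, where $g=f$, yields the claim with $c=c(n,R,\rho,\alpha,\beta)$. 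I expect the genuine content to be this last step --- the embedding of $B^\alpha_{p,\infty}$ into a Lebesgue space with the quantified gain of integrability $\frac{pn}{n-p\beta}$ --- for which I would simply quote \cite{Ha} (or a standard monograph on Besov spaces); the only point requiring some care in the rest of the argument is the bookkeeping of the supports of the factors produced by the Leibniz rule, so that the difference-quotient hypothesis is legitimately applicable on the slightly larger ball $B_\sigma$ throughout the admissible range of $h$.
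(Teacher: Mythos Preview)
The paper does not prove this lemma; it is stated as a known fractional Sobolev--Besov embedding with a reference to \cite{Ha}, so there is no ``paper's proof'' to compare against. Your strategy---localise by a cutoff, show the resulting compactly supported function lies in $B^\alpha_{p,\infty}(\R^n)$ via the discrete Leibniz rule, and then invoke the chain $B^\alpha_{p,\infty}\hookrightarrow B^\beta_{p,p}=W^{\beta,p}\hookrightarrow L^{pn/(n-p\beta)}$ for $\beta<\alpha$---is the standard route and is correct in substance.

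There is, however, a bookkeeping slip in your choice of cutoff. You take $\varphi$ supported in $B_\sigma$ with $\sigma=(R+\rho)/2>\rho$ and $\varphi\equiv 1$ on $B_\rho$, and then claim $\int_{\R^n}|\varphi\,\tau_{s,h}f|^p\,dx\le c\,M^p|h|^{p\alpha}$. But the difference-quotient hypothesis only controls $\int_{B_\rho}|\tau_{s,h}f|^p\,dx$, not the integral over the larger set $\{\varphi\neq 0\}\subset B_\sigma$, so this step does not follow as written. The fix is to reverse the roles of the radii: choose $\varphi$ supported in $B_\rho$ with $\varphi\equiv 1$ on a slightly smaller ball $B_{\rho'}$, $\rho'<\rho$. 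Then the first Leibniz term is genuinely bounded by the hypothesis, the second term is handled exactly as you wrote (now $|\tau_{s,h}\varphi|\le c(\rho-\rho')^{-1}|h|$ and $x+he_s\in B_{\rho+|h|}\subset B_\sigma\subset B_R$), and one concludes $f\in L^{pn/(n-p\beta)}(B_{\rho'})$ with constant depending on $\rho-\rho'$. This is all that is actually used later in the paper (where one always works with a nested chain of balls), and arguably all that the hypothesis really yields at the top radius $\rho$.
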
 
We also have the following embedding theorem that relates Sobolev and Besov spaces that can be deduced with the arguments of \cite[Section 30-32]{tartar}. We give the proof for the sake of clarity.
\begin{theorem}\label{thtartar}
Let $\Omega\subset\mathbb{R}^n$. The continuous embedding
$$W^{1,p}_{\mathrm{loc}}(\Omega)\hookrightarrow B^\alpha_{q,\infty,\mathrm{loc}}(\Omega)$$
holds, provided $1<p<q<+\infty$ and $\alpha=1-n\left(\frac{1}{p}-\frac{1}{q}\right).$
\end{theorem}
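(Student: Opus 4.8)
The plan is to localise and then run a difference-quotient interpolation argument of Gagliardo--Nirenberg type. Fix $\varphi\in\mathcal{C}^\infty_0(\Omega)$ and $u\in W^{1,p}_{\loc}(\Omega)$, and set $v:=\varphi\,u$, extended by $0$ outside $\mathrm{supp}\,\varphi$. Since $Dv=\varphi\,Du+u\,D\varphi$, with $\varphi\,Du\in L^p$ on $\mathrm{supp}\,\varphi$ and $u\,D\varphi\in L^p$ (because $u\in L^p_{\loc}$ and $D\varphi$ is bounded with compact support), we get $v\in W^{1,p}(\mathbb{R}^n)$, compactly supported, with $\|v\|_{W^{1,p}(\mathbb{R}^n)}\le C(\varphi)\,\|u\|_{W^{1,p}(\mathrm{supp}\,\varphi)}$. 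By the definition of the local Besov space it then suffices to prove $v\in B^{\alpha}_{q,\infty}(\mathbb{R}^n)$ together with the bound $\|v\|_{B^{\alpha}_{q,\infty}(\mathbb{R}^n)}\le C\,\|v\|_{W^{1,p}(\mathbb{R}^n)}$, which yields at once the continuity of the embedding.

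For $h\in\mathbb{R}^n$ write $\Delta_h v(x):=v(x+h)-v(x)$. The argument rests on two elementary bounds. First, from $\Delta_h v(x)=\int_0^1 h\cdot Dv(x+th)\,dt$ (valid for smooth $v$, hence by density) and Minkowski's integral inequality,
\begin{equation*}
\|\Delta_h v\|_{L^p(\mathbb{R}^n)}\le |h|\,\|Dv\|_{L^p(\mathbb{R}^n)}.
\end{equation*}
Second, suppose first $p<n$ (the requirement $\alpha\in(0,1)$ forces $q<p^*:=\tfrac{np}{\,n-p\,}$). The Sobolev--Gagliardo--Nirenberg inequality gives $\|v\|_{L^{p^*}(\mathbb{R}^n)}\le C(n,p)\,\|Dv\|_{L^p(\mathbb{R}^n)}$, whence
\begin{equation*}
\|\Delta_h v\|_{L^{p^*}(\mathbb{R}^n)}\le 2\|v\|_{L^{p^*}(\mathbb{R}^n)}\le 2\,C(n,p)\,\|Dv\|_{L^p(\mathbb{R}^n)}.
\end{equation*}

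Since $p<q<p^*$, choose $\theta\in(0,1)$ with $\tfrac1q=\tfrac{\theta}{p}+\tfrac{1-\theta}{p^*}$; Hölder's inequality (interpolation of $L^r$-norms) gives $\|\Delta_h v\|_{L^q}\le\|\Delta_h v\|_{L^p}^{\theta}\,\|\Delta_h v\|_{L^{p^*}}^{1-\theta}$. Inserting the two bounds above,
\begin{equation*}
\|\Delta_h v\|_{L^q(\mathbb{R}^n)}\le\big(|h|\,\|Dv\|_{L^p}\big)^{\theta}\big(2C(n,p)\,\|Dv\|_{L^p}\big)^{1-\theta}=\big(2C(n,p)\big)^{1-\theta}\,|h|^{\theta}\,\|Dv\|_{L^p(\mathbb{R}^n)}.
\end{equation*}
Using $\tfrac{1}{p^*}=\tfrac1p-\tfrac1n$, the defining relation for $\theta$ reads $\tfrac1q=\tfrac1p-\tfrac1n+\tfrac{\theta}{n}$, i.e. $\theta=1-n\big(\tfrac1p-\tfrac1q\big)=\alpha$. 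Hence $\sup_{h}|h|^{-\alpha}\|\Delta_h v\|_{L^q(\mathbb{R}^n)}\le C\,\|Dv\|_{L^p(\mathbb{R}^n)}<\infty$, that is $[v]_{\dot B^{\alpha}_{q,\infty}(\mathbb{R}^n)}<\infty$; moreover $v\in L^q(\mathbb{R}^n)$ since $v$ is compactly supported and $q\le p^*$. Therefore $v\in B^{\alpha}_{q,\infty}(\mathbb{R}^n)$ with $\|v\|_{B^{\alpha}_{q,\infty}}\le C\,\|v\|_{W^{1,p}(\mathbb{R}^n)}\le C(\varphi)\,\|u\|_{W^{1,p}(\mathrm{supp}\,\varphi)}$, as desired.

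It remains to dispose of the range $p\ge n$, in which $p^*$ is not available. For $p>n$ one replaces the Sobolev inequality by Morrey's embedding $W^{1,p}\hookrightarrow C^{0,1-n/p}$, which gives $\|\Delta_h v\|_{L^{\infty}(\mathbb{R}^n)}\le C\,|h|^{1-n/p}\,\|Dv\|_{L^p(\mathbb{R}^n)}$, and then interpolates between $L^p$ and $L^{\infty}$ with weight $\theta=p/q$; a short computation shows that the exponent of $|h|$ produced is $\tfrac pq+\big(1-\tfrac pq\big)\big(1-\tfrac np\big)=1-n\big(\tfrac1p-\tfrac1q\big)=\alpha$ again, while the endpoint $p=n$ is recovered by a routine approximation from the case $p<n$. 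I expect this last bit of bookkeeping — checking that the chosen interpolation exponents are admissible and that all constants are controlled by $\|v\|_{W^{1,p}}$ — to be the only mildly delicate point; the heart of the matter is the two elementary bounds together with the identity $\theta=\alpha$.
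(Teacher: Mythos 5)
Your argument is, in substance, the same as the paper's: both prove $\|\tau_h u\|_{L^q}\le C|h|^{\alpha}\|Du\|_{L^p}$ by combining the elementary bound $\|\tau_h u\|_{L^p}\le |h|\,\|Du\|_{L^p}$ with a Gagliardo--Nirenberg type interpolation; the paper quotes the inequality $\|v\|_{L^q}\le c\|v\|^{1-\eta}_{L^p}\|Dv\|^{\eta}_{L^p}$, $\eta=n(\tfrac1p-\tfrac1q)$, directly from Tartar and applies it to $v=\tau_h u$, whereas you re-derive it from the Sobolev embedding plus H\"older interpolation of Lebesgue norms. Your treatment of the cases $p<n$ (correctly noting that $\alpha>0$ forces $q<p^*$) and $p>n$ (via Morrey) is correct, and the exponent bookkeeping $\theta=\alpha$ checks out in both.

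There is, however, one genuine gap: the borderline case $p=n$, which is within the scope of the statement (e.g.\ $n=2$, $p=2$, any $q\in(2,\infty)$). Your proposed fix --- ``routine approximation from the case $p<n$'' --- does not work. If you replace $n$ by some $r<n$ with $q<r^*$ (using that $v$ has compact support, so $\|Dv\|_{L^r}\le C\|Dv\|_{L^n}$), your argument yields $\|\Delta_h v\|_{L^q}\le C|h|^{\alpha_r}$ only with $\alpha_r=1-n(\tfrac1r-\tfrac1q)<1-n(\tfrac1n-\tfrac1q)=\alpha$; since $|h|^{\alpha_r}/|h|^{\alpha}\to\infty$ as $h\to 0$, this gives membership in $B^{\alpha_r}_{q,\infty}$ but not in $B^{\alpha}_{q,\infty}$, and letting $r\nearrow n$ does not help because the Sobolev constant $C(n,r)$ blows up. The clean repair is to bypass the detour through $L^{p^*}$ and invoke the full Gagliardo--Nirenberg inequality $\|v\|_{L^q}\le C\|v\|_{L^p}^{1-\eta}\|Dv\|_{L^p}^{\eta}$, which remains valid at $p=n$ for every finite $q$ (the excluded endpoint there is only $q=\infty$); applied to $\Delta_h v$ it gives $\|\Delta_h v\|_{L^q}\le C\,(|h|\,\|Dv\|_{L^n})^{1-\eta}\,(2\|Dv\|_{L^n})^{\eta}=C|h|^{\alpha}\|Dv\|_{L^n}$ at once. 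This is exactly the inequality the paper imports from \cite{tartar}, which is why its proof needs no case distinction on $p$ versus $n$.
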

\begin{proof}
Let $v\in W^{1,p}_{\mathrm{loc}}(\Omega)$ and fix balls $B_r\subset B_R\Subset\Omega$. We start with the following interpolation inequality stated at formulas (30.8) and (30.9) in \cite{tartar},  that reads as follows
\begin{equation}\label{interp1}
||v||_{L^q(B_r)}\le c||v||_{L^p(B_r)}^{1-\eta}||Dv||_{L^q(B_r)}^\eta 	
\end{equation}
where  
$$\eta=n\left(\frac{1}{p}-\frac{1}{q}\right).$$
For $u\in W^{1,p}_{\mathrm{loc}}(\Omega)$, we use \eqref{interp1} with $\tau_h u$ in place of $v$, we get
\begin{equation}\label{interp2}
||\tau_h u||_{L^q(B_r)}\le c||\tau_h u||_{L^p(B_r)}^{1-\eta}||D\tau_h u||_{L^p(B_r)}^\eta \le c|h|^{1-\eta}||Du||_{L^p(B_R)},	
\end{equation}
where we used that, for $u\in W^{1,p}_{\mathrm{loc}}(\Omega),$ it holds 
$$ \int_{B_r}|\tau_h u|^p\,dx\le c(n,p) |h|^p \int_{B_R}|D u|^p\,dx$$
and that
$$ \int_{B_r}|Du(x+h)|^p\,dx\le c(n,p)  \int_{B_R}|D u|^p\,dx.$$
The conclusion follows by \eqref{interp2}, recalling the value of $\eta$.
\end{proof}
\medskip

\subsection{Some approximation results}
\medskip
Now, we state a useful approximation lemma whose proof can be found in \cite[Proposition 3.1]{CKPPisa} and  that will be needed in the sequel. 

\begin{lemma}
\label{lemmaCGM}
Let $F:\R^{n}\to {\mathbb{R}}$ be a $\mathcal{C}^1$ function 
satisfying  assumptions \textnormal{(H1)--(H2)}. 
 Then there exists a sequence
$({F_k})$ of $\mathcal{C}^1$ functions
${F_k}:\R^{n}\to {\mathbb{R}}$, monotonically convergent to $F$,
such that
\begin{itemize}
      \item[(I)] for every  $\xi
      \in
  \R^{n}$, and  for every $k_1 < k_2$, it holds 
 $${F_{k_1}}(\xi)\le {F_{k_2}}(\xi)\le F(\xi),$$
  \item[(II)]  for every  $\xi\in
   {\R^{n}}$,   we have 
	$$c(p, \nu)\, |V_p(\xi) - V_p(\eta)|^2 \le \, F_k(\xi) - F_k(\eta) - \langle F_k'(\eta), \xi - \eta \rangle $$
   \item[{(III)}] for every  $\xi
      \in
  \R^{n}$,
 there exist constants $L_{0},L_{1}$, independent of $k$, and
 ${\overline L}_{1}$, depending on $k$,
 such that
  \begin{equation*}
 \begin{split}
 L_{0}(|\xi|^{p}-1)&\le {F_k}(\xi)\le L_{1}(1+|\xi|)^{q},
 \\
{F_k}(\xi)&\le {\overline L}_{1}(k)(1+|\xi|)^{p}.
\end{split}
\end{equation*}
\item[(IV)] If $\xi_k\to \xi$ then $F'(\xi_k)\to F'(\xi)$ locally uniformly.
\end{itemize}
\end{lemma}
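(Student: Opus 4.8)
\textbf{Proof plan for Lemma \ref{lemmaCGM}.}

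The plan is to build $F_k$ by a double mechanism: first regularize-by-truncation so as to kill the $q$-growth at infinity and replace it with $p$-growth (gaining property (III), second line), and then convolve with a mollifier so as to regain smoothness; throughout, the strict convexity inequality (H2) must be monitored so that it survives with a $k$-independent constant, giving (II). Concretely, I would first fix a smooth cutoff profile $\eta\in\mathcal C^\infty(\R)$ with $\eta(t)=1$ for $t\le 1$, $\eta(t)=0$ for $t\ge 2$, $0\le\eta\le 1$, and set, for $k\in\N$,
\begin{equation*}
\tilde F_k(\xi) := \eta\!\left(\tfrac{|\xi|}{k}\right) F(\xi) + \left(1-\eta\!\left(\tfrac{|\xi|}{k}\right)\right)\Big(F(\xi) \wedge m_k(1+|\xi|)^p\Big),
\end{equation*}
where the constants $m_k\uparrow\infty$ are chosen large enough that $\tilde F_k\le\tilde F_{k+1}$ and $\tilde F_k\uparrow F$ pointwise. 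The idea behind the truncation at level $m_k(1+|\xi|)^p$ (rather than a flat cap) is that $(1+|\xi|)^p$ itself satisfies a strict-convexity estimate of the $V_p$-type, so that the infimum of two such functions does not destroy (H2) in the bad region; in the good region $|\xi|\le k$ we have $\tilde F_k=F$ and (H2) is inherited verbatim. The delicate bookkeeping is to check that the minimum operation only improves convexity in the sense compatible with the right-hand side of (H2) — this is the first place where care is needed, and I would handle it by working with the second-order form, using that $F$ is $\mathcal C^1$ and that the two competitors agree to first order along the switching set up to controlled error, then absorbing the error into the constant $c(p,\nu)$.

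Next I would mollify: set $F_k := \tilde F_{k}*\rho_{\delta_k}$ for a standard mollifier $\rho_\delta$ and a sufficiently small $\delta_k>0$. Convolution of a convex function with a nonnegative mollifier preserves convexity and, more importantly, preserves an inequality of the form (H2) because $V_p$-monotonicity is an integral-stable property: averaging the inequality $ \tilde F_k(\xi+z)-\tilde F_k(\eta+z)-\langle \tilde F_k'(\eta+z),\xi-\eta\rangle\ge c|V_p(\xi+z)-V_p(\eta+z)|^2$ against $\rho_{\delta_k}(z)\,dz$ and using the translation-robust lower bound from Lemma \ref{Vi} (namely $|V_p(\xi+z)-V_p(\eta+z)|^2\gtrsim(1+|\xi|^2+|\eta|^2)^{(p-2)/2}|\xi-\eta|^2$ for $|z|$ small, possibly after shrinking the constant) recovers (II) with a constant depending only on $p,\nu$. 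The growth bounds (III) follow from those of $\tilde F_k$ since mollification changes them only by harmless additive/multiplicative constants; the $\delta_k$ are then chosen small enough that monotonicity (I) and the convergence $F_k\uparrow F$ are not spoiled — possible because $F$ is continuous, hence locally uniformly approximated by its mollifications. Finally (IV): on any compact set, for $k$ large $|\xi|\le k/2$ forces $\tilde F_k=F$ there, so $F_k=F*\rho_{\delta_k}$ locally, and $\mathcal C^1$-convergence of mollifications of a $\mathcal C^1$ function gives $F_k'\to F'$ locally uniformly; if $\xi_k\to\xi$ this yields $F_k'(\xi_k)\to F'(\xi)$, and since the statement of (IV) is literally about $F'(\xi_k)\to F'(\xi)$ it is just continuity of $F'$.

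The main obstacle is the interaction between the truncation at infinity and the strict convexity bound (H2): a naive cap $F\wedge(\text{const})$ would flatten $F$ and violate (H2) entirely, so the cap must itself be a genuinely convex $p$-coercive function and the ``min of two convex functions'' must be repaired. The cleanest fix is not to take a pointwise minimum at all but to use an \emph{inf-convolution} or, better, to follow the construction of \cite{CKPPisa}: write $F_k$ as $F$ composed/combined with a convex increasing truncation of the radial variable, e.g. of the type $F_k(\xi)=F(\xi)$ for $|\xi|\le k$ and $F_k(\xi)=F(k\xi/|\xi|)+\langle F'(k\xi/|\xi|),\xi-k\xi/|\xi|\rangle+c_k(|\xi|-k)^2+\dots$ extended so as to be $\mathcal C^1$, convex, $p$-coercive and still above the tangent planes of $F$ — so that (H2) is immediate from the fact that $F_k$ lies between its own supporting hyperplanes and a quadratic. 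Since a detailed such construction is carried out in \cite[Proposition 3.1]{CKPPisa}, I would, after sketching the above, invoke it for the fine points and limit my own argument to verifying that its output meets the four itemized properties in the form stated here. $\square$
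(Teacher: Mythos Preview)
The paper does not give a proof of this lemma at all: it merely states the result and refers to \cite[Proposition~3.1]{CKPPisa} for the proof. Your proposal, after a heuristic sketch, ultimately does the same --- you invoke \cite[Proposition~3.1]{CKPPisa} for the actual construction --- so in the end your approach coincides with the paper's. One small remark: as you suspected, (IV) as literally written is just the continuity of $F'$, but the way it is used later in the paper (to conclude $\sigma_k=F_k'(Du_k)\to F'(Du)$) shows that the intended statement is $F_k'(\xi_k)\to F'(\xi)$; your sketch correctly covers that stronger reading as well.
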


Actually, a careful inspection of the proof of \cite[Proposition 3.1]{CKPPisa} reveals that there exists a sequence $\mu_k\in \mathbb{R}$ such that 
$$\lim_{k\to \infty}\mu_k=0$$
and 
$$ F_k(\xi)\ge L_0|\xi|^p-\mu_k\qquad \text{for every}\,\, k\in \mathbb{N}.\leqno{\mathrm{(II_k)}}$$

In the sequel, we shall also use the following
\begin{lemma}\label{dalma}
	Let $\psi\in W^{1,p}(\Omega)$ and $u_0\in W^{1,p}(\Omega)$, with $p\ge 1$. Suppose that there exists $g\in u_0+ W^{1,p}_0(\Omega)$ such that
	$$g(x)\ge \psi(x)\qquad\quad \text{a.e. in }\quad\Omega.$$
	Then there exists a non increasing sequence of functions $\psi_k\in u_0+ W^{1,p}_0(\Omega)$ such that
	$$\psi_k\to \psi \qquad\quad \text{a.e. in }\quad\Omega.$$
\end{lemma}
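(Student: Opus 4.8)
The plan is to build the approximating sequence by truncating the obstacle from above with the competitor $g$ and then mollifying, being careful to stay inside the affine class $u_0 + W^{1,p}_0(\Omega)$. First I would set, for each $k\in\mathbb N$, the truncation
\[
\psi_k := \min\{\psi + \tfrac1k\,(g-\psi)^{+}+ \tfrac1k\,, \; g\} \quad\text{or more simply}\quad \widetilde\psi_j := \min\{\psi, \; g\} + \text{(something tending to }\psi),
\]
but the cleanest route is the following: since $g\ge\psi$ a.e., the sequence $\psi^{(j)} := \max\{\psi, g - j^{-1}\}\wedge g$ is not quite what is needed because $\psi$ itself need not be below $g$ in a useful Sobolev sense. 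Instead I would exploit that $\psi\le g$ and $g\in u_0+W^{1,p}_0(\Omega)$ to write $\psi = g - (g-\psi)$ with $g-\psi\ge 0$, and approximate the nonnegative function $w:=g-\psi\in W^{1,p}(\Omega)$ from above by a non-decreasing sequence $w_k\to w$ a.e. with each $w_k$ \emph{lower semicontinuous and with $w_k - w \in W^{1,p}_0(\Omega)$ near $\partial\Omega$}; then $\psi_k := g - w_k$ is the desired non-increasing sequence. Since $w\ge 0$, standard facts give a non-increasing sequence of the form $w_k = \min\{w + \varepsilon_k\varphi_k, \text{truncations}\}$, with $\varepsilon_k\to 0$, converging a.e. to $w$ and equal to $w$ (hence $\psi_k = g$) outside a compact set, so that $\psi_k - u_0 = (g-u_0) - w_k \in W^{1,p}_0(\Omega)$.

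Concretely, the key steps in order would be: (i) reduce to the nonnegative function $w = g-\psi\ge 0$ in $W^{1,p}(\Omega)$; (ii) pick a cut-off $\eta_k\in\mathcal C^\infty_0(\Omega)$, $0\le\eta_k\le 1$, $\eta_k\uparrow 1$ pointwise, and consider $w + \tfrac1k(1-\eta_k)$ — but this increases $w$, not what we want; instead replace this by a \emph{decreasing} modification: set $w_k := \max\{\, w,\; \tfrac1k\, \eta_k \,\}$? No — to get $\psi_k$ \emph{non-increasing} we need $w_k$ \emph{non-decreasing} in $k$ and converging down is wrong. Let me restate cleanly: we want $\psi_k\searrow\psi$, equivalently $w_k := g-\psi_k \nearrow g-\psi = w$; so I would take $w_k := \min\{\,w\,,\; k\cdot \chi_k\,\}$ where... the honest choice is $w_k := (w - \tfrac1k)^{+}$, which is non-decreasing in $k$, converges to $w$ a.e., satisfies $0\le w_k\le w$, lies in $W^{1,p}(\Omega)$, and — crucially — equals $0$ wherever $w\le \tfrac1k$, in particular on a neighborhood of $\partial\Omega$ is \emph{not} automatic, so one more step is needed: multiply by a cutoff, $w_k := \eta_k\,(w-\tfrac1k)^{+}$ with $\eta_k\uparrow 1$, $\eta_k\in\mathcal C^\infty_0(\Omega)$. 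Then $w_k\in W^{1,p}_0(\Omega)$, $0\le w_k\le w$, $w_k\nearrow w$ a.e., hence $\psi_k := g - w_k$ is non-increasing, converges to $g-w=\psi$ a.e., and $\psi_k - u_0 = (g-u_0) - w_k \in W^{1,p}_0(\Omega)$, so $\psi_k\in u_0+W^{1,p}_0(\Omega)$.

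The only genuinely delicate point — and the one I would expect to be the main obstacle — is guaranteeing simultaneously that $(\psi_k)$ is \emph{monotone}, that it converges a.e. to $\psi$ (not merely to $\max\{\psi,\text{something}\}$ or to a mollified version), and that it respects the boundary condition $\psi_k\in u_0+W^{1,p}_0(\Omega)$; the existence of the competitor $g$ is exactly what makes the boundary condition manageable, since it lets us phrase everything in terms of the nonnegative function $w=g-\psi$ for which the truncation $(w-\tfrac1k)^{+}$ and the cutoff $\eta_k$ behave well. One should also check the trivial case $p\ge1$ does not cause issues with the chain rule for $(\,\cdot\,-\tfrac1k)^{+}$, which holds for all $p\ge1$, and that monotonicity of $\eta_k$ together with monotonicity of $(w-\tfrac1k)^{+}$ indeed yields $w_k\le w_{k+1}$ (both factors increase), giving $\psi_k\ge\psi_{k+1}$ as required. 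I would close by noting $\psi_k\to\psi$ a.e. follows from $(w-\tfrac1k)^{+}\to w$ everywhere and $\eta_k\to 1$ everywhere on $\Omega$.
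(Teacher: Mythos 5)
Your final construction is correct, and it is worth noting that the paper itself gives no proof of this lemma at all: it simply refers to \cite[Lemma 2.19]{SS18}, a variant of \cite[Lemma 1.5]{DM}. Your argument is a self-contained direct construction in the same spirit as those references: the whole point is to interpolate between $\psi$ and the admissible competitor $g$ near $\partial\Omega$, which is exactly what writing $\psi_k=g-\eta_k(w-\tfrac1k)^+$ with $w=g-\psi\ge 0$ does. All the required properties check out: $(w-\tfrac1k)^+$ is non-decreasing in $k$, lies in $W^{1,p}(\Omega)$ for every $p\ge1$ by the standard truncation lemma, and is squeezed between $0$ and $w$; multiplying by nested cutoffs $\eta_k\uparrow 1$ keeps the product non-decreasing (both factors are nonnegative and non-decreasing) and compactly supported, hence in $W^{1,p}_0(\Omega)$; and $\psi_k-u_0=(g-u_0)-w_k\in W^{1,p}_0(\Omega)$ because $g-u_0\in W^{1,p}_0(\Omega)$ by hypothesis. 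Monotonicity plus a.e.\ convergence to $\psi$ also gives $\psi_k\ge\psi$, which is what is actually used in the proof of Theorem \ref{teo1}. Two small remarks: the truncation $(w-\tfrac1k)^+$ is superfluous --- $w_k:=\eta_k\,w$, i.e.\ the convex combination $\psi_k=\eta_k\psi+(1-\eta_k)g$, already has every property you need, and this is essentially the construction in the cited lemmas; and the write-up should be cleaned of the several false starts (``but this increases $w$, not what we want'', etc.), keeping only the final definition and the verification of its four properties.
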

For the proof we refer   to \cite[Lemma 2.19]{SS18} which is a suitable version for our purposes of \cite[Lemma 1.5]{DM}.

\subsection{Harmonic extension of Sobolev functions}
Let us fix a ball $B_R\subset \mathbb{R}^n$ and consider the following Dirichlet problem
\begin{equation*}
	\begin{cases}
		\Delta h=0\qquad\qquad\text{in}\,\,B_R\cr\cr
		u=f\qquad\qquad\quad\text{on}\,\,\partial B_R	\end{cases}\eqno{\mathrm{(D)}}
\end{equation*}
where $f\in W^{1,p}(\partial B_R)$. Browder-Minty Theorem implies that problem (D) admits a unique solution $u\in W^{1,p}(B_R)$ and we can define the solution operator
$$\mathcal{S}_{\Delta}:\,\, f\in W^{1,p}(\partial B_R)\mapsto u\in W^{1,p}(B_R)$$ 
We shall use the following particular case of \cite[Theorem 4.1]{CKPJmaa} 
\begin{theorem}\label{ext}
For $1<p\le q\le\frac{pn}{n-1}$, it holds that
$$||D \mathcal{S}_{\Delta}(f)||_{L^q(B_R;\mathbb{R}^n)}\le c(n,p,q)||Df||_{L^p(\partial B_R;\mathbb{R}^{n-1})}||f||_{L^p(\partial B_R)}$$
\end{theorem}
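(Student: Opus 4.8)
The plan is to combine the Poisson representation of the harmonic extension with classical trace and embedding theorems. After the rescaling $x\mapsto x/R$, which reduces everything to the unit ball $B:=B_1$, put $h:=\mathcal{S}_\Delta(f)$, so that $h$ is harmonic in $B$ with $h|_{\partial B}=f\in W^{1,p}(\partial B)$. The crucial point is that the whole gradient $Dh$ can be reconstructed from harmonic extensions of functions lying only in $L^p(\partial B)$, after which a fractional Sobolev embedding will produce the exponent $\frac{np}{n-1}$.

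I would express $Dh$ through two families of harmonic functions. First, since the Laplacian commutes with rotations, for every infinitesimal rotation field $\Omega_{ij}=x_i\partial_j-x_j\partial_i$ the function $\Omega_{ij}h$ is harmonic in $B$ with boundary trace $\Omega_{ij}f$, a tangential derivative of $f$; hence $\Omega_{ij}f\in L^p(\partial B)$ with $\|\Omega_{ij}f\|_{L^p(\partial B)}\le\|Df\|_{L^p(\partial B)}$. Second, $x\cdot Dh$ is harmonic in $B$ because $\Delta(x\cdot Dh)=2\Delta h+x\cdot D(\Delta h)=0$, and its boundary trace is the normal derivative $\partial_\nu h|_{\partial B}=\Lambda f$, $\Lambda$ being the Dirichlet-to-Neumann map of $B$; the classical ingredient here is that $\Lambda$ is a first order operator bounded $W^{1,p}(\partial B)\to L^p(\partial B)$, so that $\Lambda f\in L^p(\partial B)$ with norm $\le c\,(\|Df\|_{L^p(\partial B)}+\|f\|_{L^p(\partial B)})$. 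For $x\neq0$ there is the algebraic identity $\partial_j h=|x|^{-2}\big(x_j\,(x\cdot Dh)+\sum_i x_i\,\Omega_{ij}h\big)$, so on the annulus $\{\frac12<|x|<1\}$ the gradient $Dh$ is a finite combination, with smooth coefficients, of $x\cdot Dh$ and of the $\Omega_{ij}h$; on $B_{1/2}$, interior estimates for the harmonic function $h$ together with the $L^p$-contractivity of the Poisson integral bound $Dh$ in every $L^q(B_{1/2})$ by $c\,\|f\|_{L^p(\partial B)}$. A cut-off equal to $1$ near $\partial B$ and vanishing on $B_{1/2}$ then glues the two regions and represents $Dh$ as a sum of Poisson integrals of $L^p(\partial B)$ functions plus a term bounded in $L^q(B)$ by $c\,\|f\|_{L^p(\partial B)}$.

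The remaining point is the sharp gain of the Poisson extension: the Poisson integral of an $L^p(\partial B)$ function belongs to $W^{1/p,p}(B)$ (for $p=2$ this is the elementary spherical-harmonics computation $\|Dh\|_{H^{1/2}(B)}\le c\,\|(Dh)|_{\partial B}\|_{L^2(\partial B)}$; in general it is the standard boundedness $L^p(\partial B)\to W^{1/p,p}(B)$ of the Poisson operator). Hence $Dh\in W^{1/p,p}(B)$ with $\|Dh\|_{W^{1/p,p}(B)}\le c(n,p)\,(\|Df\|_{L^p(\partial B)}+\|f\|_{L^p(\partial B)})$. The fractional Sobolev embedding $W^{1/p,p}(B)\hookrightarrow L^{\frac{np}{n-1}}(B)$ — the target exponent $\frac{np}{n-p\cdot(1/p)}$ being precisely $\frac{np}{n-1}$, cf. Lemma~\ref{lefrac} for a local version with $\alpha=1/p$ — followed by H\"older's inequality on the bounded set $B$ to descend from $L^{np/(n-1)}$ to $L^q$ for $q\le\frac{np}{n-1}$, yields $\|Dh\|_{L^q(B)}\le c(n,p,q)\,(\|Df\|_{L^p(\partial B)}+\|f\|_{L^p(\partial B)})$, and undoing the rescaling gives the claimed inequality on $B_R$.

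The step I expect to be the main obstacle is the one in the second paragraph: the $L^p$-boundedness of the Dirichlet-to-Neumann operator and the sharp $1/p$-derivative gain of the Poisson extension at the borderline smoothness are Calder\'on--Zygmund-type facts on the sphere, and they are exactly what is supplied by \cite[Theorem~4.1]{CKPJmaa}, of which the present statement is a particular case; everything else is assembly and rescaling.
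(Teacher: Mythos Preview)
The paper does not prove this theorem: it is introduced as ``the following particular case of \cite[Theorem 4.1]{CKPJmaa}'', and immediately afterwards the authors remark that the result ``is well known, but it is difficult to find an explicit proof and this is the reason why we refer to \cite{CKPJmaa}''. There is thus nothing in the present paper to compare your argument against, a fact you effectively acknowledge in your closing paragraph.

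Regarding the sketch itself, the strategy---writing $Dh$ in terms of the angular derivatives $\Omega_{ij}h$ and the radial derivative $x\cdot Dh$, all harmonic with $L^p(\partial B)$ boundary data, then invoking the $1/p$-smoothing of the Poisson operator together with the fractional embedding $W^{1/p,p}(B)\hookrightarrow L^{np/(n-1)}(B)$---is a standard and sound route to the estimate, and you have located the genuine analytic inputs (the $W^{1,p}\to L^p$ bound for the Dirichlet--to--Neumann map on the sphere and the sharp mapping property of the Poisson integral) accurately. One side remark: your argument terminates with the additive bound $\|Dh\|_{L^q(B)}\le c\big(\|Df\|_{L^p(\partial B)}+\|f\|_{L^p(\partial B)}\big)$, whereas the displayed statement carries a \emph{product} on the right; that product is not scale-consistent across the stated range of $q$ and is almost certainly a misprint---only the qualitative conclusion $D\mathcal{S}_\Delta(f)\in L^q$ is ever used later in the paper.
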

It is worth mentioning that previous result is well known, but it is difficult to find an explicit proof and this is the reason why we refer to \cite{CKPJmaa}. 

\subsection{Dual formulation of the obstacle problem}

This section is devoted to establish  the dual formulation of  obstacle problems with standard growth conditions, extending classical ideas of Kohn and Temam \cite{KT83} and Anzellotti \cite{A84} and following \cite{SS18}. 
For the readers' convenience  we recall a few key results about convex duality here and we refer to \cite[Chapter 1]{ET} for details.
\\
Given a convex continuous function $F:\mathbb{R}^n\to \mathbb{R}$,
its polar (or Fenchel conjugate)  is defined by
\begin{equation}
\label{Fconiug}
F^*(\zeta) := \sup_{\xi \in \mathbb{R}^n} \left (\langle \zeta, \xi \rangle - F(\xi) \right ) \qquad {\forall \,}\zeta \in \mathbb{R}^n.
\end{equation}
The function $F^*: \mathbb{R}^n \rightarrow \mathbb{R}$ is convex and, if $F$ satisfies assumption (H1), $F^*$ has $(q', p')$ growth, where $p'$ and $q'$ are the H\"older conjugate exponents of $p, q$ respectively, i.e. there exist constants $c(L), c(\ell)$ such that
$$c(L)|\zeta|^{q'}\le F^*(\zeta)\le c(\ell)|\zeta|^{p'} \qquad {\forall \,\zeta \in \mathbb{R}^n.}$$
\\
One can check that the bipolar integrand $F^{**} := (F^*)^*$ equals $F$ at $\xi$ if and only if $F$ is lower semicontinuous and convex at $\xi$, as it is the case here.

From the definition of polar function directly follows  the Young-type (or Fenchel) inequality
\begin{equation}
\label{Fenchel}
\langle \zeta, \xi \rangle \le \, F^*(\zeta) + F^{**}(\xi)
\end{equation}
for all $\zeta, \xi \in \mathbb{R}^n$.
\\
 Notice that, for a given $\xi$, we have equality in \eqref{Fenchel} precisely for $\zeta \in \partial F^{**}(\xi)$, the subgradient of $F^{**}$ at $\xi$. In particular, when $F$ is  $\mathcal{C}^1$, 
 for every $\xi\in\mathbb{R}^n$, we have equality in \eqref{Fenchel} precisely for $\zeta =F'(\xi)$. Actually, it holds the following 
\begin{equation}
\label{Fenchel-equal}
F(\xi) + F^*(F'(\xi)) = \langle F'(\xi), \xi \rangle,
\end{equation}
for every $\xi\in\mathbb{R}^n$.
Indeed, the convexity of $F$, since $F\in \mathcal{C}^1$, gives
\[
F(\xi) \ge \, F(\eta) + \langle F'(\eta), \xi - \eta \rangle \qquad \forall\,\, \xi, \eta \in \mathbb{R}^n,
\]
which is equivalent to 
\[
\langle F'(\eta), \eta \rangle - F(\eta) \ge \, \langle F'(\eta), \xi \rangle - F(\xi) \qquad \forall\,\, \xi, \eta \in \mathbb{R}^n. 
\]
From this we deduce that
\[
\langle F'(\eta), \eta \rangle - F(\eta) \ge \, \sup_{\xi \in \mathbb{R}^n} \left[ \langle F'(\eta), \xi \rangle - F(\xi) \right ] = F^*(F'(\eta)), 
\]
by the definition of $F^*(\zeta)$ at \eqref{Fconiug}.
Thus
\[
\langle F'(\eta), \eta \rangle \ge \, F(\eta) + F^*(F'(\eta)) 
\]
which obviously gives equality in \eqref{Fenchel}, i.e. \eqref{Fenchel-equal}.

Now, we consider for any $p > 1$
\begin{equation}
\label{classeSpprimo}
S^{p'}_{-}(\Omega) = \{\sigma \in L^{p'}(\Omega): \,\, {\rm div} \sigma \le \, 0\quad \text{in}\quad \mathcal{D}'(\Omega)\},
\end{equation}
where as usual $p'=\frac{p}{p-1}$ and, for $u_0, U \in W^{1,p}(\Omega),$  we introduce a measure  [\![$\sigma, DU$]\!]$_{u_0}$ on $\overline{\Omega}$ by setting
\begin{equation}
\label{duality}
[\![\sigma, DU]\!]_{u_0} ( \overline{\Omega}) = \int_{\Omega}  (U - u_0) \, d (- {\rm div} \sigma)  + \int_{\Omega} \langle \sigma, D u_0 \rangle \, dx.
\end{equation}
For $\tilde{u} \in u_0 + W^{1,p}_0(\Omega)$,  the measure $[\![\sigma, D\tilde{u}]\!]_{u_0}(\bar\Omega)$ corresponds to the function $\langle \sigma, D \tilde{u} \rangle \in L^1(\Omega)$ as it follows from the well known integration by parts formula
\begin{equation}
\label{by-parts}
\int_{\Omega} \varphi d (- {\rm div} \sigma) = \int_{\Omega} \langle \sigma, D \varphi \rangle \, dx,
\end{equation} 
for every $\varphi\in W^{1,p}_0(\Omega)$.
 
The dual formulation of  obstacle problems under standard growth conditions  is contained in the following:
\begin{theorem}
\label{teo1}
Let $G: \mathbb{R}^n \rightarrow \mathbb{R}$ be a $\mathcal{C}^1$, strictly convex function satisfying
\[
\ell_p \,( |\xi|^p-1) \le \, G(\xi) \le \, L_p(1 + |\xi|^p),
\]
for all $\xi \in \mathbb{R}^n$ and an exponent $p > 1$. Then
\begin{equation}
\label{dual}
\min_{v \in {\mathcal{K}_{\psi}(\Omega)}} \int_{\Omega} G(Dv) \, dx = \max_{\sigma \in S^{p'}_{-}(\Omega)} \left ([\![\sigma, D\psi]\!]_{u_0}(\overline{\Omega}) - \int_{\Omega} G^*(\sigma) \, dx \right )
\end{equation}
where  $S^{p'}_{-}(\Omega)$,   $[\![\sigma, D\psi]\!]_{u_0}$  and ${\mathcal{K}_{\psi}}(\Omega)$ are defined   in \eqref{classeSpprimo},  \eqref{duality}  and \eqref{classeA} respectively.
\\
If moreover $u\in \mathcal{K}_\psi(\Omega)$ is the solution to \eqref{p-obst-def0}, then
\begin{equation}
\label{dualminmax}
\int_{\Omega} G(Du) \, dx = [\![G'(Du), D\psi]\!]_{u_0}(\overline{\Omega}) - \int_{\Omega} G^*(G'(Du)) \, dx. 
\end{equation}
\end{theorem}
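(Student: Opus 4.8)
\emph{Proof proposal.} The plan is to run the classical Fenchel--Rockafellar duality scheme. I would prove the ``easy'' inequality via Fenchel's inequality and the integration by parts formula \eqref{by-parts}, and then exhibit an optimal dual field, namely $\sigma_0:=G'(Du)$, using the Euler--Lagrange variational inequality for the obstacle problem together with a complementarity relation saying that the reaction measure $-\mathrm{div}\,G'(Du)$ is concentrated on the contact set. Existence of the minimizer $u\in\mathcal{K}_\psi(\Omega)$ of the left-hand side of \eqref{dual} follows from the direct method, since $G$ is convex and $p$-coercive and $\mathcal{K}_\psi(\Omega)$ is a nonempty closed convex subset of $u_0+W^{1,p}_0(\Omega)$; uniqueness follows from the strict convexity of $G$. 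For the inequality ``$\ge$'' in \eqref{dual}, fix $v\in\mathcal{K}_\psi(\Omega)$ and $\sigma\in S^{p'}_-(\Omega)$: the Fenchel inequality \eqref{Fenchel} gives $G(Dv)\ge\langle\sigma,Dv\rangle-G^*(\sigma)$ a.e.; since $v-u_0\in W^{1,p}_0(\Omega)$, the remark after \eqref{by-parts} rewrites $\int_\Omega\langle\sigma,Dv\rangle\,dx=\int_\Omega(v-u_0)\,d(-\mathrm{div}\,\sigma)+\int_\Omega\langle\sigma,Du_0\rangle\,dx$, and the nonnegativity of the measure $-\mathrm{div}\,\sigma$ together with $v\ge\psi$ yields $\int_\Omega(v-u_0)\,d(-\mathrm{div}\,\sigma)\ge\int_\Omega(\psi-u_0)\,d(-\mathrm{div}\,\sigma)$ (the right-hand side being $-\infty$ if not integrable, in which case the inequality is trivial). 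Recalling \eqref{duality}, this gives $\int_\Omega G(Dv)\,dx\ge[\![\sigma,D\psi]\!]_{u_0}(\overline{\Omega})-\int_\Omega G^*(\sigma)\,dx$, and taking the infimum over $v$ and the supremum over $\sigma$ proves ``$\ge$''.

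\emph{Admissibility of $\sigma_0=G'(Du)$.} From convexity and the upper growth bound for $G$ one obtains the standard pointwise estimate $|G'(\xi)|\le c\,(1+|\xi|)^{p-1}$, hence $\sigma_0\in L^{p'}(\Omega;\R^n)$. Testing minimality of $u$ with the competitor $u+t\varphi\in\mathcal{K}_\psi(\Omega)$, where $\varphi\in\mathcal{C}^\infty_0(\Omega)$, $\varphi\ge0$ and $t>0$, and differentiating under the integral sign at $t=0^+$ (legitimate by dominated convergence, using the growth of $G'$) gives $\int_\Omega\langle\sigma_0,D\varphi\rangle\,dx\ge0$, i.e.\ $\mathrm{div}\,\sigma_0\le0$ in $\mathcal{D}'(\Omega)$. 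Thus $\sigma_0\in S^{p'}_-(\Omega)$, and $\mu:=-\mathrm{div}\,\sigma_0$ is a nonnegative Radon measure lying in $W^{-1,p'}(\Omega)$; in particular it does not charge sets of $p$-capacity zero, so it pairs with the quasicontinuous representatives of $W^{1,p}$ functions.

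\emph{Complementarity.} The core of the argument is the identity $\int_\Omega(u-\psi)\,d\mu=0$. For $\phi\in\mathcal{C}^\infty_0(\Omega)$, $\phi\ge0$, the function $v:=u-\min\{u-\psi,\phi\}$ satisfies $v\ge\psi$, and $v-u_0=(u-u_0)-\min\{u-\psi,\phi\}\in W^{1,p}_0(\Omega)$ since $\min\{u-\psi,\phi\}$ is a $W^{1,p}$ function compactly supported in $\Omega$; hence $v\in\mathcal{K}_\psi(\Omega)$. Minimality gives $\int_\Omega\langle\sigma_0,D(v-u)\rangle\,dx\ge0$, that is $\int_\Omega\langle\sigma_0,D\min\{u-\psi,\phi\}\rangle\,dx\le0$, and, since $\min\{u-\psi,\phi\}\in W^{1,p}_0(\Omega)$, integration by parts turns this into $\int_\Omega\min\{u-\psi,\phi\}\,d\mu\le0$, hence $\int_\Omega\min\{u-\psi,\phi\}\,d\mu=0$ because both the integrand and $\mu$ are nonnegative. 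Choosing $\phi=\phi_k$ with $\phi_k\equiv k$ on a fixed $B\Subset\Omega$ and letting $k\to\infty$, the monotone convergence theorem on $B$ (where $\mu$ is finite) gives $\int_B(u-\psi)\,d\mu=0$, and then $B\uparrow\Omega$ gives the claim.

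\emph{Conclusion.} By the equality case \eqref{Fenchel-equal} of the Fenchel inequality, $G^*(\sigma_0)=\langle\sigma_0,Du\rangle-G(Du)$ a.e.; all terms belong to $L^1(\Omega)$ by the growth bounds, so integrating, using $u-u_0\in W^{1,p}_0(\Omega)$ and the remark after \eqref{by-parts}, and replacing $\int_\Omega(u-u_0)\,d\mu$ by $\int_\Omega(\psi-u_0)\,d\mu$ thanks to the complementarity identity, one gets, recalling \eqref{duality},
\[
\int_\Omega G(Du)\,dx=[\![G'(Du),D\psi]\!]_{u_0}(\overline{\Omega})-\int_\Omega G^*(G'(Du))\,dx,
\]
which is \eqref{dualminmax}. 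Together with the weak duality inequality of the first paragraph this forces equality in \eqref{dual}, with the maximum attained at $\sigma_0=G'(Du)$. I expect the complementarity identity to be the main obstacle: it rests on the correct choice of admissible variations in the variational inequality and on a monotone passage to the limit for the reaction measure $\mu$, together with the standard but technical facts that $\mu\in W^{-1,p'}(\Omega)$ ignores $p$-capacity null sets and that truncations such as $\min\{u-\psi,\phi\}$ remain in $W^{1,p}_0(\Omega)$.
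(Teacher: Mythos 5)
Your proposal is correct, and it shares the paper's overall architecture: weak duality via Fenchel's inequality \eqref{Fenchel}, the integration by parts formula \eqref{by-parts} and the sign of $-\mathrm{div}\,\sigma$ against $v-\psi\ge 0$; then strong duality by showing that $\sigma_0=G'(Du)$ is admissible and optimal through the pointwise identity \eqref{Fenchel-equal}. The two arguments part ways only at the step that closes optimality. The paper does \emph{not} prove any complementarity relation: it only establishes the inequality $\int_\Omega\langle\sigma_0,D\psi-Du\rangle\,dx\ge 0$, by invoking Lemma \ref{dalma} to produce a nonincreasing sequence $\psi_k\in\mathcal{K}_\psi(\Omega)$ converging a.e.\ to $\psi$, testing the variational inequality with $\psi_k$ and passing to the limit by monotone convergence; this yields $\int_\Omega G(Du)\,dx\le[\![\sigma_0,D\psi]\!]_{u_0}(\overline\Omega)-\int_\Omega G^*(\sigma_0)\,dx$, and the equality \eqref{dualminmax} is then forced by sandwiching with the weak duality inequality \eqref{minmax}. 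You instead prove the exact Kuhn--Tucker identity $\int_\Omega(u-\psi)\,d(-\mathrm{div}\,\sigma_0)=0$ by testing with $v=u-\min\{u-\psi,\phi\}$ and letting the truncation level blow up, which gives \eqref{dualminmax} directly as an identity and, as a bonus, the information that the reaction measure is carried by the contact set --- something the paper never extracts. The price of your route is that you must pair the measure $-\mathrm{div}\,\sigma_0$ with the possibly unbounded function $u-\psi$, so you genuinely need the remark that a nonnegative distribution which is the divergence of an $L^{p'}$ field does not charge sets of $p$-capacity zero and pairs with quasicontinuous representatives; this is standard, and since the paper's own uses of \eqref{by-parts} and of the pairing $\int_\Omega(\psi-u)\,d(-\mathrm{div}\,\sigma)$ rest on the same conventions, it is not a gap relative to the paper's level of rigour. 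Both arguments are sound; yours is slightly stronger in its conclusions, the paper's is slightly lighter on measure-theoretic machinery.
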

\begin{proof}
Let us consider an arbitrary vector field $\sigma \in S^{p'}_{-}(\Omega)$ and a function $v \in \mathcal{K}_{\psi}(\Omega)$. Since $- {\rm div} \sigma$ is a non-negative Radon measure and $v \ge \psi$ in $\Omega$ a.e. in $\Omega$, we have
\begin{equation}
\label{var-ineq}
\int_{\Omega} (v - \psi) d (- {\rm div} \sigma) \ge \, 0.
\end{equation}
By the definition at \eqref{duality} and by using \eqref{var-ineq}, we infer that
\begin{eqnarray*}
[\![\sigma, D\psi]\!]_{u_0}(\overline{\Omega}) &=& \int_{\Omega} (\psi - u_0) d (- {\rm div} \sigma)  + \int_{\Omega} \langle \sigma, D u_0 \rangle \, dx \\
 &=& \int_{\Omega} (\psi - v + v - u_0) d (- {\rm div} \sigma)  + \int_{\Omega} \langle \sigma, D u_0 \rangle \, dx \\
 &\le& \int_{\Omega} (v - u_0) d (- {\rm div} \sigma)  + \int_{\Omega} \langle \sigma, D u_0 \rangle \, dx.
\end{eqnarray*}
Since $v,u_0\in W^{1,p}(\Omega)$ and $v = u_0$ on $\partial \Omega$, we can use \eqref{by-parts} in the first integral of the last line of previous formula, thus getting
\begin{eqnarray*}
[\![\sigma, D\psi]\!]_{u_0}(\overline{\Omega}) &=& \int_{\Omega} \langle \sigma, Dv - Du_0 \rangle \, dx  + \int_{\Omega} \langle \sigma, D u_0 \rangle \, dx \\
&=& \int_{\Omega} \langle \sigma, Dv \rangle \, dx \le \, \int_{\Omega} G(Dv) \, dx + \int_{\Omega} G^*(\sigma) \, dx,
\end{eqnarray*}
by the Young's type inequality at {\eqref{Fenchel}} and since, by our assumptions on $G$, it holds that $G=G^{**}$.
 This entails that
\[
\int_{\Omega} G(Dv) \, dx \ge \, [\![\sigma, D\psi]\!]_{u_0}(\overline{\Omega}) - \int_{\Omega} G^*(\sigma) \, dx
\]
and thus, passing to the minimum in $v$ in the left hand side  and to the maximum in $\sigma$ in the right hand side of previous inequality, we get
\begin{equation}\label{minmax}
	\min_{v \in {\mathcal{K}_{\psi}(\Omega)}} \int_{\Omega} G(Dv) \, dx \ge \, \max_{\sigma \in S^{p'}_{-}(\Omega)} \left ([\![\sigma, D\psi]\!]_{u_0}(\overline{\Omega}) - \int_{\Omega} G^*(\sigma) \, dx \right ).
\end{equation}
In order to prove the reverse inequality, we start from \eqref{Fenchel-equal} with the choice $F=G$ and $\xi = Du$, being $u \in {\mathcal{K}_{\psi}}$ a solution to \eqref{p-obst-def0}. Integrating over $\Omega$ we  have that
\begin{eqnarray*}
\int_{\Omega} G(Du) \, dx &=&  \int_{\Omega} \langle G'(Du), Du \rangle \, dx - \int_{\Omega} G^*(G'(Du)) \, dx \\
&=& \int_{\Omega} \langle G'(Du), Du - Du_0 \rangle \, dx + \int_{\Omega} \langle G'(Du), D u_0 \rangle \, dx - \int_{\Omega} G^*(G'(Du)) \, dx. 
\end{eqnarray*}
Accordingly to the terminology used so far, we set $\sigma := G'(Du)$ and recall that, since $G$ satisfies standard growth condition, the  variational inequality
at \eqref{var-ineq-varphi}
	 holds true for every $\varphi\in \mathcal{C}^{\infty}_0(\Omega), \varphi \ge 0$ and therefore $\sigma\in S_-^{p'}(\Omega)$. Then we have
\begin{eqnarray*}
\int_{\Omega} G(Du) \, dx &=& \int_{\Omega} \langle \sigma, Du - Du_0 \rangle \, dx + \int_{\Omega} \langle \sigma, D u_0 \rangle \, dx - \int_{\Omega} G^*(\sigma) \, dx\\
&=& \int_{\Omega} (u - u_0) \, d (- {\rm div} \sigma) + \int_{\Omega} \langle \sigma, D u_0 \rangle \, dx - \int_{\Omega} G^*(\sigma) \, dx\\
&=& \int_{\Omega} (\psi - \psi + u - u_0) \, d (- {\rm div} \sigma) + \int_{\Omega} \langle \sigma, D u_0 \rangle \, dx - \int_{\Omega} G^*(\sigma) \, dx\\
&=& \int_{\Omega} (\psi - u_0) \, d (- {\rm div} \sigma) - \int_{\Omega} (\psi - u) \, d (- {\rm div} \sigma) + \int_{\Omega} \langle \sigma, D u_0 \rangle \, dx - \int_{\Omega} G^*(\sigma) \, dx\\
&=& [\![\sigma, D\psi]\!]_{u_0}(\overline{\Omega}) - \int_{\Omega} \langle \sigma, D \psi - Du \rangle \, dx - \int_{\Omega} G^*(\sigma) \, dx\\
&\le& [\![\sigma, D\psi]\!]_{u_0}(\overline{\Omega}) - \int_{\Omega} G^*(\sigma) \, dx,
\end{eqnarray*}
where we used once more the integration by parts formula \eqref{by-parts} and in the last line we exploited the fact that
\begin{equation}\label{var2}
\int_{\Omega} \langle \sigma, D \psi - Du \rangle \, dx \ge \, 0.	
\end{equation}

Actually, since $u\in u_0+W^{1,p}(\Omega)$ and $u\ge \psi$ as long as $u$ is the solution of our obstacle problem, we can use Lemma \ref{dalma} to deduce that there exists a non increasing sequence $\psi_k\in u_0+W^{1,p}(\Omega)$ such that $$\psi_k\to \psi\qquad \text{a.e. in}\,\,\, \Omega.$$
Therefore, 
$$\psi_k\in \mathcal{K}_\psi(\Omega)$$
and, since $u$ is a solution of \eqref{var-ineq-stand}, we have
\[
\int_{\Omega} \langle \sigma, D \psi_k - Du \rangle \, dx \ge \, 0 \qquad \text{for every}\,\, k\in \mathbb{N}. 
\]
Via the monotone convergence Theorem, passing to the limit in previous inequality, we deduce the validity of \eqref{var2}.

Summing up we have
\begin{eqnarray*}
\min_{v \in {\mathcal{K}_{\psi}(\Omega)}} \int_{\Omega} G(Dv) \, dx &=& \int_{\Omega} G(Du) \, dx \\
&\le& \, [\![\sigma, D\psi]\!]_{u_0}(\overline{\Omega}) - \int_{\Omega} G^*(\sigma) \, dx \\
&\le& \, \max_{\sigma \in S^{p'}_{-}(\Omega)}\left \{  [\![\sigma, D\psi]\!]_{u_0}(\overline{\Omega}) - \int_{\Omega} G^*(\sigma) \, dx\right \}.	
\end{eqnarray*}
Combining previous estimate with \eqref{minmax} we establish \eqref{dual} and, recalling that $\sigma=G'(Du)$, the equality at \eqref{dualminmax}.
\end{proof}
\medskip
\section{Proof of Theorem \ref{teo2}}
In this section we shall establish the validity of the variational inequality associated to our obstacle problem, by using the duality theory and the approximation Lemma of the previous Section. More precisely we are ready to give the
\begin{proof}[Proof of Theorem \ref{teo2}]
For the sake of clarity we shall divide the proof in steps. In the first one, we shall use the approximation Lemma to construct a sequence of obstacle problems with standard growth condition for which the dual problem is given by Theorem \ref{teo1}. In the second step, we prove that the sequence of approximating minimizers converges to the solution of problem \eqref{obst-def0}, as well as the sequence of dual maximizers converges to a field whose divergence is a non positive Radon measure. Finally in Step 3 and 4 we establish the validity of the variational inequality.
 
\medskip
{\it Step 1. The approximation}\quad
Let $F_k$ be the sequence of functionals obtained applying  Lemma \ref{lemmaCGM} to the integrand $F$. We recall that $F_k \nearrow F$ and that $F_k$ are of class $\mathcal{C}^1$ and strictly convex, with $p-$growth.
\\
Let $u_k \in {\mathcal{K}_{\psi}(\Omega)}$ be the solution to the obstacle problem
\begin{equation}
\label{minFk}
\min_{w \in \mathcal{K}_{\psi}(\Omega)} \int_{\Omega} F_k(Dw) \, dx
\end{equation}
and let 
\begin{equation}
\label{sigmak}
\sigma_k := F'_k(Du_k) \in \mathcal{S}_-^{p'}(\Omega)
\end{equation}
be the solution of the dual problem given by \eqref{dual}, i.e. $\sigma_k$ is such that
\[
\max_{\sigma \in S^{p'}_{-}(\Omega)} \left \{ [\![\sigma, D\psi]\!]_{u_0}(\overline{\Omega}) - \int_{\Omega} F_k^*(\sigma)\,dx \right \} = [\![\sigma_k, D\psi]\!]_{u_0}(\overline{\Omega}) - \int_{\Omega} F_k^*(\sigma_k)\,dx, 
\]
where $F^*_k$ denotes the polar function of $F_k$. By (II) and (III) of Lemma \ref{lemmaCGM}, we are legitimate to apply Theorem \ref{teo1} to each $F_k$. Therefore, from \eqref{dualminmax} with $G = F_k$, $u = u_k$ and $\sigma = \sigma_k$, we have that the following equality
\[
\int_{\Omega} F_k(Du_k) \, dx = [\![\sigma_k, D\psi]\!]_{u_0}(\overline{\Omega}) - \int_{\Omega} F_k^*(\sigma_k) \, dx 
\]
holds for all $k \in \mathbb{N}$.
\\
As long as $F_k$ satisfy a uniform $(p,q)-$growth condition, then, as already remarked in Section \ref{prelim}, $F_k^*$ satisfy a uniform $(q', p')-$growth condition, and, since $F_k(\xi)\nearrow F(\xi)$, it is not difficult to check that $F_k^*(\zeta) \searrow F^*(\zeta)$ as $k \rightarrow \infty$, pointwise in $\zeta$.  Furthermore, since $F_k$ satisfy standard growth conditions, we also have that $u_k$ solve the following variational inequality 
\begin{equation}
\label{var-ineq2}
\int_{\Omega} \langle \sigma_k, D \varphi - D u_k \rangle \, dx \ge \, 0 \qquad \forall\, \varphi \in \mathcal{K}_{\psi}(\Omega)\quad \text{and}\,\,\forall\, k\in\mathbb{N}.
\end{equation}

\medskip
{\it Step 2. Passage to the limit.}\quad
Our next purpose is  to prove that $u_k \rightarrow u$ strongly in $W^{1,p}(\Omega)$, where $u$ is the solution of the obstacle problem at \eqref{obst-def0}.

First of all, we observe that
\begin{eqnarray}\label{crescita1}
	L_0 \int_{\Omega} |D u_k|^p \, dx &\le& \int_{\Omega} [F_k(D u_k) + L_0] \, dx\le \, \int_{\Omega} [F_k(Du_0) + L_0] \, dx\cr\cr
	& \le &  \, \int_{\Omega} [F(Du_0) + L_0] \, dx < + \infty,
\end{eqnarray}

where we  used \eqref{assumuzero}, the growth condition on $F_k$ expressed at (III) of Lemma \ref{lemmaCGM}, the minimality of $u_k$, the fact that, by virtue of Remark \ref{rem-serena}, we can use $u_0$ as test function and also that $F_k \nearrow F$. This tells us that the sequence $\{u_k\}_{k}$ is bounded in $W^{1,p}(\Omega)$. Then, by the reflexivity of $W^{1,p}(\Omega)$, it admits a subsequence weakly converging to some $v \in W^{1,p}(\Omega)$. We have that $v \in \mathcal{K}_{\psi}(\Omega)$ because $u_k \in \mathcal{K}_{\psi}(\Omega)$ and $\mathcal{K}_{\psi}(\Omega)$ is a convex  closed set, therefore weakly closed. 

Fix $k_0\in \mathbb{N}$, by the lower semicontinuity of $ F_{k_0}$, we have
\[
\liminf_{k \rightarrow + \infty} \int_{\Omega} F_{k_0}(D u_k) \, dx \ge \, \int_{\Omega} F_{k_0}(D v) \, dx 
\]
and the monotonicity of the sequence $F_k$ yields 
\[\int_{\Omega} F_{k_0}(D u_k) \, dx\le \int_{\Omega} F_{k}(D u_k) \, dx\]
for every $k>k_0$. Therefore
\[
 \, \int_{\Omega} F_{k_0}(D v) \, dx 
\le \liminf_{k \rightarrow + \infty} \int_{\Omega} F_{k}(D u_k) \, dx\]
and since $F_k \nearrow F$, taking the limit as $k_0\to \infty$ , we deduce, by the monotone convergence theorem, that 
\begin{equation}
\label{ineq1}
\liminf_{k \rightarrow + \infty} \int_{\Omega} F_k(D u_k) \, dx \ge \, \int_{\Omega} F(D v) \, dx.
\end{equation}
Thus in particular it turns out that $v \in \mathbb{K}_{\psi}^F(\Omega)$ and so we can exploit the minimality of $u$ in the class $\mathbb{K}_{\psi}^F(\Omega)$ to finally end up with
\begin{equation}
\label{ineq1-bis}
\liminf_{k \rightarrow + \infty} \int_{\Omega} F_k(D u_k) \, dx \ge \, \int_{\Omega} F(D v) \, dx \ge \, \int_{\Omega} F(D u) \, dx.
\end{equation}
On the other hand, by the minimality of $u_k$ we have
$$\int_{\Omega} F_k(D u_k) \, dx \le \int_{\Omega} F_k(D u) \, dx,$$
since $u\in \mathbb{K}_\psi(\Omega)\subset \mathcal{K}_\psi(\Omega).$  Using once more  the monotone convergence theorem, we get
\begin{equation}
\label{ineq2}
\limsup_{k \rightarrow + \infty} \int_{\Omega} F_k(D u_k) \, dx \le \, \limsup_{k \rightarrow + \infty} \int_{\Omega} F_k(D u) \, dx = \int_{\Omega} F(D u) \, dx.
\end{equation}
By a direct comparison of \eqref{ineq1} and \eqref{ineq2} we deduce that
\begin{equation}
\label{ineq3}
\int_{\Omega} F_k(D u_k) \, dx \rightarrow \int_{\Omega} F(Du) \, dx = \int_{\Omega} F(D v) \, dx,
\end{equation}
but the strict convexity of $F$ implies the uniqueness of the solutions and therefore $u = v$. 

To deduce the strong convergence of $u_k$ towards $u$, we exploit (II) of Lemma \ref{lemmaCGM} and \eqref{var-ineq2} with $u$ in place of $\varphi$, namely 
\begin{eqnarray*}
c(p, \nu)\int_{\Omega}  \, |V_p(Du) - V_p(Du_k)|^2 \, dx &\le& \, \int_{\Omega} \left (F_k(Du) - F_k(D u_k) - \langle F_k'(Du_k), Du - D u_k \rangle \right ) \, dx \\
&\le& \, \int_{\Omega} (F_k(Du) - F_k(D u_k)) \, dx \rightarrow 0\\
\end{eqnarray*}
as $k \rightarrow + \infty$, where in the last line we used \eqref{ineq3} and the equality in \eqref{ineq2}. Therefore, by Lemma \ref{Vi}, we get
\[
{c}\int_{\Omega}  \, |Du - Du_k|^2(1+|Du|^2+|Du_k|^2)^{\frac{p-2}{2}} \, dx \le \, \int_{\Omega} |V_p(Du) - V_p(Du_k)|^2 \, dx \rightarrow 0
\]
which entails the desired strong convergence  $$u_k \rightarrow u\qquad \textnormal{strongly in $W^{1,p}{(\Omega)}$}.$$
Indeed, if $p\ge 2$, this follows from the trivial inequality
$$|Du-Du_k|^p\le |Du-Du_k|^2(1+|Du|^2+|Du_k|^2)^{\frac{p-2}{2}}$$
while, for $1<p<2$, we may use H\"older's inequality with exponents $\frac{2}{p}$ and $\frac{2}{2-p}$ as follows
$$\int_{\Omega}  \, |Du - Du_k|^p \, dx=\int_{\Omega}  \, |Du - Du_k|^p(1+|Du|^2+|Du_k|^2)^{\frac{p(p-2)}{4}}(1+|Du|^2+|Du_k|^2)^{\frac{p(2-p)}{4}} \, dx$$
$$\le \left(\int_{\Omega}  \, |Du - Du_k|^2(1+|Du|^2+|Du_k|^2)^{\frac{(p-2)}{2}}\,dx\right)^{\frac{p}{2}}\left(\int_{\Omega}(1+|Du|^2+|Du_k|^2)^{\frac{p}{2}}  \,dx\right)^{\frac{2-p}{2}}$$
$$\le C\left(\int_{\Omega}  \, |Du - Du_k|^2(1+|Du|^2+|Du_k|^2)^{\frac{(p-2)}{2}}\,dx\right)^{\frac{p}{2}},$$
where, in the last line, we used \eqref{crescita1}.


The use of (IV) of Lemma \ref{lemmaCGM} for $\xi_k = Du_k$ and $\xi = Du,$ yields that 
$$\sigma_k  = F'_k(Du_k) \rightarrow F'(Du)\qquad  \text{locally uniformly {as $k \rightarrow \infty$.}}$$
It follows in particular that $F'_k(Du_k) \rightarrow F'(Du)$ in measure on $\Omega$ and so passing to the limit in the equality
\begin{equation}
\label{Fenchelk}
\langle \sigma_k, Du_k \rangle = F_k^*(\sigma_k) + F_k(Du_k),
\end{equation}
which has been deduced by \eqref{Fenchel-equal} with $\xi = Du_k$ and $G = F_k$, we recover, with $\sigma = F'(Du),$ the pointwise extremality relation 
\begin{equation}
\label{Fenchel-lim}
\langle F'(Du), Du \rangle = F^*(F'(Du)) + F(Du).
\end{equation}
\\
{\it Step 3. The validity of \eqref{tuttoL1}.}
\quad
Integrating \eqref{Fenchelk} over $\Omega$ and since we have assumed, by virtue of Remark \ref{rem-serena}, that $u_0 \in \mathbb{K}_{\psi}^{F}(\Omega),$ we obtain
\begin{eqnarray*}
\int_{\Omega} F_k^*(\sigma_k) \, dx &=& \int_{\Omega} \langle \sigma_k, D u_k \rangle \, dx - \int_{\Omega} F_k(Du_k) \, dx \\
&\le& \, \int_{\Omega} \langle \sigma_k, D u_0 \rangle \, dx  - \int_{\Omega} F_k(Du_k) \, dx \\
&=& \frac{1}{2} \int_{\Omega} \langle \sigma_k, 2 D u_0 \rangle \, dx  - \int_{\Omega} F_k(Du_k) \, dx \\
&\le & \frac{1}{2} \int_{\Omega} F_k^*(\sigma_k) \, dx + \frac{1}{2} \int_{\Omega} F_k(2 D u_0)  - \int_{\Omega} F_k(Du_k) \, dx.
\end{eqnarray*}
Reabsorbing the first term in the right hand side by the left hand side, we get
\begin{equation}
\label{controlloFstark}
\frac{1}{2} \int_{\Omega} F_k^*(\sigma_k) \, dx \le \, \frac{1}{2} \int_{\Omega} F(2 D u_0) \, dx  - \int_{\Omega} F_k(Du_k) \, dx \le \, C \int_{\Omega} F(D u_0) \, dx,
\end{equation}
by (H3) and \eqref{crescita1}. 

Recalling that $F^*_k \searrow F^*,$ from \eqref{controlloFstark} we also have that
\begin{equation*}
\label{3.11bis}
\int_{\Omega} F^*(\sigma_k) \, dx \le \, C \, \int_{\Omega} F(D u_0) \, dx
\end{equation*}

Since we already observed that $\sigma_k \rightarrow F'(Du)$ locally uniformly 
and $F^*(\sigma_k)\ge 0$ for every $k$, by Fatou's lemma and by previous estimate
\[
\int_{\Omega} F^*(F'(Du)) \, dx \le \, \liminf_{k \rightarrow + \infty} \int_{\Omega} F^*(\sigma_k) \, dx \le \, C \, \int_{\Omega} F(Du_0) \, dx.
\]
Thus 
\[
F^*(F'(Du)) \in L^1(\Omega).
\]
Whence, by \eqref{Fenchel-lim}, we also have
\[
\langle F'(Du), Du \rangle \in L^1(\Omega).
\]
since $F(Du)\in L^1(\Omega)$ by the definition of minimizer. 

\medskip

{\it Step 4.The validity of the variational inequality.}
\\
For this purpose, we note that in view of the  $(q', p')-$growth of $F^*(	\sigma)$ and of  $F^*_k(\sigma_k)$, previous  inequality and \eqref{controlloFstark},
\begin{equation}
\label{sigmaLq'}
\int_{\Omega} |F'(Du)|^{q'} \,dx \le \limsup_{k\to +\infty}  \int_{\Omega} |\sigma_k|^{q'}\le  \, C \, \int_{\Omega} F(Du_0) \, dx.
\end{equation}
Therefore $$\sigma_k\rightharpoonup \sigma\qquad \text{weakly\,\,in}\qquad L^{q'}(\Omega)$$
and by the convergence of $\sigma_k$  to $\sigma$ in measure, we also have
\begin{equation}\label{strong}
\sigma_k\to \sigma\qquad \text{strongly\,\,in}\qquad L^{r}(\Omega)\qquad \text{for\,every}\,\, r<q'
\end{equation}
and therefore also $\sigma_k\to \sigma$ a.e. up to a subsequence.
The minimality of $u_k$ yields the validity of the following variational inequality
\begin{equation}\label{conv1}
\int_\Omega \langle \sigma_k,D\eta\rangle\,dx\ge 0\qquad \text{for\, all}\,\, \eta\in  C^\infty_0(\Omega),\,\,\,\eta\ge 0	,
\end{equation}
and so, by the weak convergence of $\sigma_k$ to $\sigma$ in $L^{q'}(\Omega)$, passing to the limit as $k\to \infty$ in previous inequality, also
 \begin{equation}\label{conv12}
\int_\Omega \langle \sigma,D\eta\rangle\,dx\ge 0\qquad \text{for\, all}\,\, \eta\in  C^\infty_0(\Omega),\,\,\,\eta\ge 0	,
\end{equation}
This yields that $\mathrm{div}\,\sigma\le 0$ in the distributional sense, i.e. \eqref{div}. 
By \eqref{var-ineq2}, we have
\begin{equation}\label{conv1}
\int_\Omega \langle \sigma_k,Dz-Du_k\rangle\,dx\ge 0\qquad \text{for\, all}\,\, z\in  \mathbb{K}_\psi^F(\Omega),
\end{equation}
since $\mathbb{K}_\psi^F(\Omega)\subset \mathcal{K}_\psi(\Omega)$.
Before going on, we note that 
\begin{equation}\label{conv2}
	\int_\Omega \langle \sigma,Du\rangle\,dx\le \liminf_{k\to +\infty}\int_\Omega \langle \sigma_k,Du_k\rangle\,dx.
\end{equation}
Indeed, by \eqref{Fenchelk} we get
\begin{equation}
\langle \sigma_k,Du_k\rangle=F^*_k(\sigma_k)+F_k(Du_k)\ge C(L)|\sigma_k|^{q'}+	L_0|Du_k|^p-\mu_k\ge -\mu_k,
\end{equation}
where we used (II$_k$) and that $F^*_k(\xi)\ge F^*(\xi)\ge C(L)|\xi|^{q'}$. 
\\
Therefore
for the nonnegative sequence of functions $\langle \sigma_k,Du_k\rangle +\mu_k\ge 0$ that converges a.e. to $\langle \sigma,Du\rangle$, we are legitimate to apply Fatou's Lemma to deduce that
$$\int_\Omega \langle \sigma,Du\rangle\,dx\le \liminf_{k\to +\infty}\int_\Omega \big(\langle \sigma_k,Du_k\rangle+\mu_k\big)\,dx=\liminf_{k\to +\infty}\int_\Omega \langle \sigma_k,Du_k\rangle\,dx,$$
i.e. \eqref{conv2}.
Using \eqref{Fenchel}, we have  that
\[
|\langle \sigma_k,Dz\rangle| \le \, 2F_k^*(\sigma_k)+F(Dz)+F(-Dz),
\]
where we used that $F_k\le F$ for every $k\in \mathbb{N}$.
Therefore, by assumption \eqref{pm} and \eqref{controlloFstark}, the sequence $\langle\sigma_k,Dz\rangle$ is equi-integrable. In fact
\begin{eqnarray*}
\int_\Omega |\langle\sigma_k,Dz\rangle|\,dx &\le&  \int_\Omega F^*_k(\sigma_k)+\int_\Omega F(Dz)\,dx+\int_\Omega F(-Dz)\,dx\cr\cr
&\le& C\left(\int_\Omega F(Du_0)\,dx+\int_\Omega F(Dz)\,dx+\int_\Omega F(-Dz)\,dx\right).
\end{eqnarray*}
Using that  $\langle\sigma_k,Dz\rangle\to \langle\sigma,Dz\rangle$ a.e., Vitali's convergence Theorem implies
\begin{equation}\label{conv3bis}
	\langle\sigma_k,Dz\rangle\to \langle\sigma,Dz\rangle\qquad\text{strongly\,\,in}\qquad L^1(\Omega).
\end{equation}
Writing \eqref{conv1} as follows
$$\int_\Omega \langle \sigma_k,Dz\rangle\,dx\ge \int_\Omega \langle \sigma_k,Du_k\rangle\,dx $$
and taking the liminf as $k\to+\infty$ in previous equality, we get
$$\liminf_{k\to+\infty}\int_\Omega \langle \sigma_k,Dz\rangle\,dx\ge \liminf_{k\to+\infty}\int_\Omega \langle \sigma_k,Du_k\rangle\,dx $$
and so, using \eqref{conv3bis} in the left hand side and  \eqref{conv2} in the right hand side,
we conclude that
 \begin{equation*}
 \int_\Omega \langle \sigma,Dz\rangle\,dx\ge \int_\Omega \langle \sigma,Du\rangle\,dx \qquad \text{for\, all}\,\, z\in \mathbb{K}_\psi^F(\Omega)\,\, \text{such that}\,\, F(\pm Dz)\in L^1(\Omega),	
 \end{equation*}
 i.e. \eqref{pmlim}. 

\end{proof}
\section{Proof of Theorem \ref{teo3}}

This section is devoted to the proof of the regularity result stated in Theorem \ref{teo3}. 
\begin{proof}[Proof of Theorem \ref{teo3}]
 We start by observing that every $z \in \mathcal{K}_{\psi}(\Omega)$ belongs to $ W^{1,p}(\Omega)$. For every $x_0 \in \Omega$ we can find a ball $B = B(x_0, R) \subset \Omega$ such that $z_{|_{\partial B}} \in W^{1,p}(\partial B)$, see for instance {\cite{ziemer}}. Then,  by Theorem \ref{ext}, $z_{|_{\partial B}}$ has an harmonic extension $h$ to $B$, such that $h \in W^{1, \frac{np}{n-1}}(B)$. 
 Therefore, by the assumption  $q < \frac{np}{n-1}$ and by virtue of (H1), we get $F(Dh) \in L^1(B)$. 
 Moreover by the maximum principle, since $z \ge \psi$ a.e. in $B$ and $z = h$ on $\partial B$, also $h \ge \psi$ a.e. in $B$. Therefore $h \in {\mathbb{K}_{\psi}^F(B)}$ and the arguments of the proof of Theorem \ref{teo2}, replacing $\Omega$ with $B$ and $u_0$ with $h$, give that
 \[
F^*(F'(Du)) \in L^1_{\loc}(\Omega) \qquad \qquad \langle F'(Du), Du \rangle \in L^1_{\loc}(\Omega)
\]
and
$${\rm div} F'(Du) \le \, 0$$
locally, in the distributional sense.
\\Our next purpose is to prove that $u\in W^{1,q}_{\loc}(\Omega)$.
To this aim, let $F_k$, $u_k$ be respectively the sequence of functionals and their minimizers introduced in the proof of previous Theorem.  
Let us consider $\varphi_k := u_k + t v_k$ for a suitable $v_k \in W^{1,p}_0(\Omega)$ such that
\begin{equation}
\label{cond-v}
u_k - \psi + t \, v_k \ge 0 \qquad \textnormal{for $t \in [0,1)$}.
\end{equation}
Such function $\varphi_k$ belongs to the obstacle class ${\mathcal{K}_{\psi}(\Omega)}$, because $\varphi_k = u_k + t v_k \ge \psi$ and $\varphi_k \in u_0+ W^{1,p}_{\loc}(\Omega)$. 
\\
Now we fix  balls $B_{\frac{R}{2}}\subset B_\rho\subset B_{R}$ such that $B_{2R}\Subset \Omega$ and  a cut off function $\eta\in C^\infty_0(B_R)$, $\eta \equiv 1$ on $B_{\rho}$ such that  $|\nabla \eta|\le \frac{c}{R-\rho}$. Due to the local nature of our results, we suppose $R\le 1$ without loss of generality.
Then, for  $|h|<\frac{R}{4}$, we take
\begin{equation}
\label{fun-v}
v^1_k(x) = \eta^2(x) [(u_k - \psi)(x + h) - (u_k - \psi)(x)].
\end{equation}
 From the regularity of $u_k$ and $\psi$, we deduce that $v^1_k \in W^{1,p}_0(\Omega)$. Moreover $v^1_k$ fulfills \eqref{cond-v}. Indeed, for a.e. $x \in \Omega$ and for any $t \in [0,1)$
\begin{eqnarray*}
&&u_k(x)- \psi(x)  + t v^1_k(x)\\
&=& u_k(x) - \psi(x)  + t \eta^2(x) [(u_k - \psi)(x + h) - (u_k - \psi)(x)]\\
&=& t \, \eta^2(x) (u_k - \psi)(x+h) + (1 - t \eta^2(x)) (u_k - \psi)(x) \ge \, 0,
\end{eqnarray*}
because $u_k \in \mathcal{K}_{\psi}(\Omega)$.
\\
With this choice in  \eqref{var-ineq2}, we obtain 
\begin{equation}
\label{sei}
0 \le \, \int_{\Omega} \langle F_k'(Du(x)), D[\eta^2(x) [(u_k - \psi)(x + h) - (u_k - \psi)(x)]] \rangle \, dx.
\end{equation}
On the other hand, if we introduce
\begin{equation}
\label{fun-v-tras}
v^2_k(x) = \eta^2(x-h) [(u_k - \psi)(x-h) - (u_k - \psi)(x)]
\end{equation}
then  $v^2_k \in W^{1,p}_0(\Omega)$ and it satisfies condition \eqref{cond-v}, as long as
\begin{eqnarray*}
&&u_k(x)- \psi(x)  + t v^2_k(x)\\
&=& u_k(x) - \psi(x)  + t \eta^2(x-h) [(u_k - \psi)(x-h) - (u_k - \psi)(x)]\\
&=& t \, \eta^2(x-h) (u_k - \psi)(x-h) + (1 - t \eta^2(x-h)) (u _k- \psi)(x) \ge \, 0.
\end{eqnarray*}
Choosing in \eqref{var-ineq2} as test function $\varphi_k = u_k + t v^2_k$, where $v^2_k$ is defined in \eqref{fun-v-tras},
we get 
$$ 0 \le \, \int_{\Omega} \langle F'_k(Du_k(x)), D[\eta^2(x-h) [(u_k - \psi)(x-h) - (u_k - \psi)(x)]] \rangle \, dx,$$
Changing variable we get
\begin{equation}
\label{sette}
0 \le \, \int_{\Omega} \langle F'_k(Du_k(x+h)), D[\eta^2(x) [(u_k - \psi)(x) - (u_k - \psi)(x+h)]] \rangle \, dx.
\end{equation}
Thus by adding \eqref{sei} and \eqref{sette}, we obtain
\begin{eqnarray*}
0 &\le& \int_{\Omega} \langle F'_k(Du_k(x)), D[\eta^2(x) [(u_k - \psi)(x+h) - (u_k - \psi)(x)]] \rangle \, dx\\
&& +  \int_{\Omega} \langle F'_k(Du_k(x+h)), D[\eta^2(x) [(u_k - \psi)(x) - (u_k - \psi)(x+h)]] \rangle \, dx\\
&=& \int_{\Omega} \langle F'_k(Du(x)) - F'_k(Du_k(x+h)), D[\eta^2(x) [(u_k - \psi)(x+h) - (u_k - \psi)(x)]] \rangle \, dx,
\end{eqnarray*}
which implies
\begin{eqnarray*}
\!\!\!\! 0 &\ge& \int_{\Omega} \langle F'_k(Du_k(x+h)) - F'_k(Du_k(x)), \eta^2(x) D[(u_k - \psi)(x+h) - (u_k - \psi)(x)]\rangle \, dx\\
 && \!\!\!\!\!\! + \int_{\Omega} \langle F'_k(Du_k(x+h)) - F'_k(Du_k(x)), 2 \, \eta(x) \, D \eta(x) \, [(u_k - \psi)(x+h) - (u_k - \psi)(x)]\rangle \, dx.
\end{eqnarray*}
The previous inequality can be rewritten as follows
\begin{eqnarray}
0 &\ge & \int_{\Omega} \langle F'_k(Du_k(x+h)) - F'_k(Du_k(x)), \eta^2 (Du_k(x+h) - Du_k(x)) \rangle \, dx \nonumber\\
&& - \int_{\Omega} \langle F'_k(Du_k(x+h)) - F'_k(Du_k(x)), \eta^2 (D\psi(x+h) - D\psi(x)) \rangle \, dx \nonumber\\
&& + \int_{\Omega} \langle F'_k(Du_k(x+h)) - F'(Du_k(x)), 2 \eta \, D \eta \tau_{h}(u_k - \psi) \rangle \, dx \nonumber\\
&= :& I + II + III, \label{I-to-III}\end{eqnarray}
that yields
\begin{equation}\label{otto}
I \le \, |II| + |III|.
	\end{equation}
The ellipticity of $F_k$ expressed by (II) of Lemma \ref{lemmaCGM} implies
\begin{equation}\label{I}
I \ge \, c(p, \nu) \int_{\Omega} \eta^2 |\tau_{h} V_p(Du_k)|^2 \, dx.
\end{equation}
For the estimation of {$II$ and $III$,} we use H\"older's inequality  to deduce that
\begin{eqnarray}
\label{III}
{|II| + |III|} &\le& \,\left(\int_{\Omega} \eta^{2} |F_k'(Du_k(x))|^{q'} \, dx\right)^{\frac{1}{q'}}\left(\int_{\Omega}\eta^2|\tau_{h} D\psi|^q\, dx\right)^{\frac{1}{q}}\cr\cr
&&+\left(\int_{\Omega} \eta^{q'} |F'_k(Du_k(x))|^{q'} \, dx\right)^{\frac{1}{q'}}\left(\int_{\Omega}|D\eta|^q|\tau_{h} u_k|^q\, dx\right)^{\frac{1}{q}}\cr\cr
&&+	\left(\int_{\Omega} \eta^{q'} |F'_k(Du_k(x))|^{q'} \, dx\right)^{\frac{1}{q'}}\left(\int_{\Omega}|D\eta|^q|\tau_{h} \psi|^q\, dx\right)^{\frac{1}{q}}\cr\cr
&\le& \,\left(\int_{B_{\frac{3}{4}R}}  |F'_k(Du_k(x))|^{q'} \, dx\right)^{\frac{1}{q'}}\left(\int_{B_R}|\tau_{h} D\psi|^q\, dx\right)^{\frac{1}{q}}\cr\cr
&&+\frac{c}{R-\rho}\left(\int_{B_{\frac{3}{4}R}}  |F'_k(Du_k(x))|^{q'} \, dx\right)^{\frac{1}{q'}}\left(\int_{B_R}|\tau_{h} u_k|^q\, dx\right)^{\frac{1}{q}}\cr\cr
&&+\frac{c}{R-\rho}	\left(\int_{B_{\frac{3}{4}R}}  |F'_k(Du_k(x))|^{q'} \, dx\right)^{\frac{1}{q'}}\left(\int_{B_R}|\tau_{h} \psi|^q\, dx\right)^{\frac{1}{q}},
\end{eqnarray}
where we used the properties of $\eta$ and that, since $u\in W^{1,p}(\Omega)$ and $q<\frac{pn}{n-1}<\frac{pn}{n-p}$ we have that $\tau_h u\in L^q(\Omega)$.
We used also that$$ \int_{B_R}|f(x+h)|^{q'}\,dx\le c\int_{B_{\frac{3}{4}R}}|f(x)|^{q'}\,dx,$$
for $|h|<\frac{R}{4}$.
Denoted by $h_u$ the harmonic extension of $u$ to the ball $B_{2R}$, we can use  \eqref{sigmaLq'} with $h_u$ in place of $u_0$ to obtain
$$\left(\int_{B_{\frac{3}{4}R}}  |F'_k(Du_k(x))|^{q'} \, dx\right)^{\frac{1}{q'}}\le C \left(\int_{B_{2R}}  F(Dh_u(x)) \, dx\right)^{\frac{1}{q'}}$$
and so
\begin{eqnarray}
\label{II}
{|II| + |III|} &\le&\, \left(\int_{B_{2R}}  F(Dh_u(x)) \, dx\right)^{\frac{1}{q'}}\left(\int_{B_R}|\tau_{h} D\psi|^q\, dx\right)^{\frac{1}{q}}\cr\cr
&&+ \frac{C}{R-\rho}\left(\int_{B_{2R}}  F(Dh_u(x)) \, dx\right)^{\frac{1}{q'}}\left(\int_{B_R}|\tau_{h} u_k|^q\, dx\right)^{\frac{1}{q}}\cr\cr
&&+\frac{C}{R-\rho}	\left(\int_{B_{2R}}  F(Dh_u(x)) \, dx\right)^{\frac{1}{q'}}\left(\int_{B_R}|\tau_{h} \psi|^q\, dx\right)^{\frac{1}{q}}\cr\cr
&\le&\, {C} \left(\int_{B_{2R}}  {(|Dh_u(x)|^{q} + 1)}\, dx\right)^{\frac{1}{q'}}\left(\int_{B_R}|\tau_{h} D\psi|^q\, dx\right)^{\frac{1}{q}}\cr\cr
&&+ \frac{C}{R-\rho} \left(\int_{B_{2R}}  {(|Dh_u(x)|^{q} + 1)} \, dx\right)^{\frac{1}{q'}}\left(\int_{B_R}|\tau_{h} u_k|^q\, dx\right)^{\frac{1}{q}}\cr\cr
&&+\frac{C}{R-\rho}	\left(\int_{B_{2R}}  {(|Dh_u(x)|^{q} + 1)} \, dx\right)^{\frac{1}{q'}}\left(\int_{B_R}|\tau_{h} \psi|^q\, dx\right)^{\frac{1}{q}} 
\end{eqnarray}
where  we used the right inequality in (H1). Therefore, plugging \eqref{I} and \eqref{II} in \eqref{otto} and using  that $\eta\equiv 1$ on $B_{\rho}$, we obtain
\begin{eqnarray}\label{intermedia}
&&c(p, \nu) \int_{B_{\rho}}  |\tau_{h} V_p(Du_k)|^2\le  {C}
\left(\int_{B_{2R}}  {(|Dh_u(x)|^{q} + 1)} \, dx\right)^{\frac{1}{q'}}\left(\int_{B_R}|\tau_{h} D\psi|^q\, dx\right)^{\frac{1}{q}}
\cr\cr
&&+ \frac{C}{R}	\left(\int_{B_{2R}}  {(|Dh_u(x)|^{q} + 1)}\, dx\right)^{\frac{1}{q'}}\left(\int_{B_R}|\tau_{h} u_k|^q\, dx\right)^{\frac{1}{q}}\cr\cr
&&+	\frac{C}{R} \left(\int_{B_{2R}}  {(|Dh_u(x)|^{q} + 1)} \, dx\right)^{\frac{1}{q'}}\left(\int_{B_R}|\tau_{h} \psi|^q\, dx\right)^{\frac{1}{q}}. 
\end{eqnarray}
Using the assumption on $\psi$ and the embedding of Theorem \ref{thtartar} with
$$\alpha=1-n\left(\frac{1}{p}-\frac{1}{q}\right)\qquad\text{and}\qquad q=\frac{np}{n-\alpha p},$$ we arrive at
\begin{eqnarray}\label{intermedia2}
&& \int_{B_{R/2}}  |\tau_{h} V_p(Du_k)|^2 \cr\cr
&\le&  C|h|
\left(\int_{B_{2R}}  {(|Dh_u(x)|^{q} + 1)} \, dx\right)^{\frac{1}{q'}}\left(\int_{B_{2R}}| D^2\psi|^q\, dx\right)^{\frac{1}{q}}
\cr\cr
&&+ \frac{C|h|^\alpha}{R-\rho}	\left(\int_{B_{2R}}  {(|Dh_u(x)|^{q} + 1)}\, dx\right)^{\frac{1}{q'}}\left(\int_{B_R}|D u|^p\, dx\right)^{\frac{1}{p}}\cr\cr
&&+	\frac{C|h|}{R-\rho} \left(\int_{B_{2R}}  {(|Dh_u(x)|^{q} + 1)} \, dx\right)^{\frac{1}{q'}}\left(\int_{B_{2R}}|D \psi|^q\, dx\right)^{\frac{1}{q}}\cr\cr
&\le& C|h|^\alpha  \left(\int_{B_{2R}} {(|Dh_u(x)|^{q} + 1)} \, dx\right)^{\frac{1}{q'}}\left(||Du||_{L^p(B_{2R})}+||\psi||_{W^{2,q}(B_{2R})}\right). 
\end{eqnarray}
 Estimate \eqref{intermedia2} implies that $V_p(Du_k)\in B^{\frac{\alpha}{2},2}_\infty$ and therefore again by Lemma \ref{lefrac}, we have  $$Du_k\in L^t_{\loc}(\Omega)\qquad \text{for every}\quad t<\frac{np}{n\left(1+\frac{1}{p}-\frac{1}{q}\right)-1}$$
Following \cite{CKPPisa}, we now define the increasing sequence of exponents
$$p=p_0\qquad\qquad p_j=\frac{np}{n\left(1+\frac{1}{p_{j-1}}-\frac{1}{q}\right)-1}$$
One can easily check that
$$p_j\nearrow \frac{n(p-1)}{n-1-\frac{n}{q}}$$
and that for $q< \frac{pn}{n-1}$ we have
$$q< \frac{n(p-1)}{n-1-\frac{n}{q}}.$$
Therefore, iterating estimate \eqref{intermedia2} we deduce that the sequence $u_k$ is bounded in $W^{1,q}(B_R)$ and therefore its limit $u$ also belongs to $W^{1,q}(B_R)$. This conclude the proof.
\end{proof}
%
%

\end{document}